\newtheorem{theorem}{Theorem}
\theoremstyle{plain}
\newtheorem{thm0}{Theorem}
\newtheorem{cor0}{Corollary}
\newtheorem{thm}{Theorem}[subsection]
\newtheorem{lemma}[thm]{Lemma}
\newtheorem{axiom}[thm]{Axiom}
\newtheorem{question}[thm]{Question}
\theoremstyle{definition}
\newtheorem{def0}{Definition}
\newtheorem{axi0}{Axiom}
\theoremstyle{remark}
\newtheorem{remark}[thm]{Remark}
\newtheorem{eg}[thm]{Example}
\newcommand{\proofof}[1]{\end{#1}\begin{proof}}
\renewcommand\section{\@startsection {section}{1}{\z@}%
  {-3.5ex \@plus -1ex \@minus -.2ex}{2.3ex \@plus.2ex}%
  {\normalfont\large\bfseries}}
\renewcommand\subsection{\@startsection{subsection}{2}{\z@}%
  {-3.25ex\@plus -1ex \@minus -.2ex}{1.5ex \@plus .2ex}%
  {\normalfont\bfseries}}
\DeclareMathAlphabet{\mathrmsl}{OT1}{cmr}{m}{sl}
\newcommand{\rssymb}[2]{\newcommand{#1}{\mathrmsl{#2}} }
\newcommand{\oper}[3][n]{\newcommand{#2}{\mathop{\mathrm{#3}}%
\ifx n#1\nolimits\else\limits\fi} }
\newcommand{\rsoper}[3][n]{\newcommand{#2}{\mathop{\mathrmsl{#3}}%
\ifx n#1\nolimits\else\limits\fi} }
\newcommand\doi[1]{\href{http://dx.doi.org/#1}{\texttt{doi:#1}}}
\tikzset{block/.style={draw, thick, text width=2cm , minimum height=1.3cm, align=center},   
line/.style={-latex}     
}  
\oper\Ad{Ad}
\oper\ad{ad}
\oper\val{val}
\oper\coker{coker}
\oper\mult{mult}
\oper\Iso{Iso}
\oper\End{End}
\oper\Aut{Aut}
\oper\Sub{Sub}
\oper\Alt{Alt}
\oper\Ext{Ext}
\oper\Pic {Pic}
\oper\Sym{Sym}
\oper\Spec{Spec}
\oper\Spf{Spf}
\oper\Sp{Sp}
\oper\Spa{Spa}
\oper\Proj{Proj}
\rsoper\divg{div}
\rsoper{\sym}{sym}
\rsoper{\alt}{alt}
\rsoper\trace{tr}
\rssymb\id{id}
\newcommand{\thismonth}{\ifcase\month\or
  January\or February\or March\or April\or May\or June\or
  July\or August\or September\or October\or November\or December\fi
  \space\number\year}
\newcommand{\address}[1]{\gdef\@address{#1}}
\newcommand{\email}[1]{\gdef\@email{\url{#1}}}
\newcommand{\@endstuff}{\par\vspace{\baselineskip}\noindent\small
\begin{tabular}{@{}l}\scshape\@address\\\textit{E-mail address:} \@email\end{tabular}}
\address{MFO Oberwolfach}
\email{tewari@math.tu-berlin.de}
\newcommand{\subjclass}[2][2020]{%
  \let\@oldtitle\@title%
  \gdef\@title{\@oldtitle\footnotetext{#1 \emph{Mathematics subject classification.} #2}}%
}
\newcommand{\keywords}[1]{%
  \let\@@oldtitle\@title%
  \gdef\@title{\@@oldtitle\footnotetext{\emph{Key words and phrases.} #1.}}%
}
\keywords{tropical collineation}
\title{Fundamental Theorem of Projective Geometry over Semirings}
\author{Ayush Kumar Tewari}
\subjclass{14N05, 16Y60, 14T10}
\keywords{linear semiring, semiring collineations, tropical perspectivity, tropical projectivity}
\begin{document}

\maketitle

\begin{abstract}
We state the fundamental theorem of projective geometry for semimodules over semirings, which is facilitated by recent work in the study of bases in semimodules defined over semirings. In the process we explore in detail the linear algebra setup over semirings. We also provide more explicit results to understand the implications of our main theorem on maps between tropical lines in the tropical plane. Along with this we also look at geometrical connections to the rich theory of tropical geometry. 
\end{abstract}

\tableofcontents

\section{Introduction}

Maps between projective spaces and their underlying algebraic structures have been a classical area of study in algebraic geometry. This has been immensely enriched with the seminal work of Tits on the theory of buildings and the connections with the study of classical groups. One of the prominent results in this case is the well known fundamental theorem of projective geometry which is a structure theorem and elaborates on the structure of the collinearity preserving maps between projective spaces associated with vector spaces over fields. This in turn also helps in assessing the automorphism group of such spaces and the group of isomorphisms of the underlying field. The theorem has appeared in various forms throughout the literature in the 19th and 20th century, with various attributions attached to the different versions. The first occurrence of some versions of the statement can be dated back to the work of Darboux, Segre, Veblen and Von Staudt \cite{von1847geometrie}, \cite{darboux}, \cite{segre1890nuovo}.  

Since then considerable work has been done to understand these maps and their outcomes in terms of the associated geometry. Various versions of the statement have been proved for linear systems over algebraic structures other than fields, for e.g. free modules over commutative rings \cite{ojanguren1969note}. Our goal in this article is to understand these results in the realm of semirings and semimodules over them. The notion of linear independence and basis elements is a bit nuanced in this setting. The one stark difference in this setting is that two basis of a semimodule over a semiring may not have the same number of elements. We use recent findings about the properties of basis over semimodules \cite{YJT14} and develop a background to define the notion of semiring semi-linear maps and semiring collineations.


\begin{center}
\begin{tikzpicture}  
\node[fill=olive] (a) {Modules (Vector Spaces)};  
\node[right=4cm of a, fill=yellow] (b) {Semimodules};   
\node[below=6cm of a, fill=green] (d) {Rings (Fields)};  
\node[right=5.1cm of d, fill=orange] (c) {Semirings};  
     ysep=6mm,fit=(d)(e),label={130:A}](f){}; 
\draw[line][dotted] (a)-- (b);  
\draw[line][dotted] (d)-- (c);  
\draw[line] (a)-- node[sloped, above, text width=5.0cm] {Fund. Theorem of Projective} node[sloped, below, text width=5.0cm] {Geometry}(d);  
\draw[line] (b)-- node[sloped, above, text width=5.0cm] {Fund. Theorem of (Tropical)} node[sloped, below, text width=5.0cm] {Projective Geometry} (c);  
  
\end{tikzpicture}  
\end{center}

Our goals in this work are twofold; firstly we describe an algebraic setup of semimodules over semirings along with a geometric aspect to understand how maps act on "lines" in "planes", with special emphasis in the case when the underlying semiring is the \emph{tropical semiring}. One of our first observations is that \emph{collinearity} is geometrically well represented by \emph{coaxiality} in the tropical setup. In Section \ref{sect:Section2}, we discuss the basic background of linear algebra over semirings, and describe the results regarding bases of semimodules over \emph{linear} semirings. In Section \ref{sect:Section3} we establish our main contribution in the form of Theorem \ref{thm:tropicalfund}, which we state as the fundamental theorem of projective geometry over semirings, which can be also understood as a fundamental theorem of tropical projective geometry. In Section \ref{sect:section4}, we see a more explicit version where we prove a variant of the fundamental theorem, restricted to the case of the tropical plane.

In Sections \ref{sec:section5} we discuss possible generalizations of our results in terms of a possible theory of buildings and complexes for tropical flag manifolds and flag varieties, which is inspired by some recent work in this field \cite{brandt2021},\cite{bao2020flag}. We also recognize that the theory of finitely generated semimodules ties up with \emph{tropical convexity}, as finitely generated semimodules are also considered as \emph{tropical cones} in the literature \cite{maclagan2009introduction},\cite{joswig2014essentials}. We introduce some of the connections of our work to the theory of tropical convexity and are pretty hopeful to take it further in subsequent work. We also highlight the fact that our results also complements the study on matrix semigroups over semirings \cite{matrix2018tropical}, \cite{gould2020matrix} and can have fruitful connections to problems concerning semigroups. We close our discussion with possible definitions of \emph{tropical cross ratios} which might be suitable in order to study tropical projective geometry further.

\paragraph{Acknowledgements}
I would like to thank Marianne Johnson and Hannah Markwig who provided their valuable comments and suggestions on this draft. I would also like to thank Michael Joswig for sharing his thoughts on the topic with me. I am also grateful to the Mathematisches Forschungsinstitut Oberwolfach for providing excellent facilities and for their warm hospitality during my stay. This research was supported through the programme  "Oberwolfach Leibniz Fellows" by the Mathematisches Forschungsinstitut Oberwolfach in 2021.

\section{Linear Algebra over Semirings}\label{sect:Section2}

A semiring is defined as follows \cite{golan2013semirings},

\begin{def0}
A semiring is a set $R$ equipped with two operations $+$ and $.$ such that $(R,+)$ is an abelian monoid with identity element $0$ and $(R,.)$ is also a monoid with identity element 1, along with a distributive law which respects the two operations. 
\end{def0}

An element $a$ in $R$ is called \emph{invertible} if there exists $b \in R$ such that $ab = ba =1$.We denote by $U(R)$ the set of all invertible elements in $R$. A semiring $R$ is called a \textit{semidomain} if $ab = ac$ implies $b = c$ for all $b, c \in R$ and all nonzero $a \in R$. 

As is clear from the definition, no element of a semiring has a additive inverse. For our discussions we would be considering the multiplicative operation to be commutative, unless otherwise stated.

We define a semimodule \cite{golan2013semirings}, which is based on the definition of a module over a commutative ring.

\begin{def0}
Let $R$ be a semiring. A $R$-semimodule is a commutative monoid $(M,+)$ with additive identity $\theta$ for which we have a  function $R \times M \longrightarrow M$, denoted by $(\lambda,\alpha) \rightarrow \lambda \alpha$ and called a scalar multiplication, which satisfies the following conditions for all $\lambda, \mu$ in $R$ and $\alpha,\beta$ in $M$:
\begin{enumerate}
    \item $(\lambda\mu) \alpha = \lambda(\mu \alpha);$
    \item $\lambda(\alpha + \beta) = \lambda \alpha + \lambda \beta;$
    \item $(\lambda + \mu) \alpha = \lambda \alpha + \mu \alpha;$
    \item $1 \alpha = \alpha$
    \item $\lambda \theta = \theta = 0\alpha$
\end{enumerate}
\end{def0}

\begin{eg}
For a semiring $R$, $R$ is also a $R$- semimodule over itself. Similarly, $R^{d}$ is also a $R$- semimdoule. 
\end{eg}

A nonempty subset $N$ of $M$ is said to be a subsemimodule of $M$ if it is closed under addition and scalar multiplication.

Let $S$ be a nonempty subset of a $R$-semimodule $M$. Then the intersection of all subsemimodules of $M$ containing $S$ is a subsemimodule of $M$, called the subsemimodule generated by $S$ and denoted by $RS$ \cite{YJT14}.

\[ RS = \{\sum_{i=1}^{k} \lambda_{i} \alpha_{i} | \lambda_{i} \in R, \alpha_{i} \in S, i \in \underline{k}, k \in \mathbb{N}\} \]

The expression $\sum_{i=1}^{k} \lambda_{i} \alpha_{i}$ is called a \emph{linear combination} of the elements $\alpha_{1}, \alpha_{2}, \hdots \alpha_{k}$. If $RS = M$, then $S$ is called a \emph{generating set} for $M$. We state some definitions from \cite{golan2013semirings},

\begin{def0}
Let $M$ be an $R$-semimodule. A non-empty subset $S$ of $M$ is called \emph{linearly independent} if $\alpha \not \in R(S \setminus \{ \alpha \})$ for any $\alpha$ in $S$. If $S$ is not linearly independent then it is called \emph{linearly dependent}. The set $S$ is called \emph{free} if each element in $M$ can be expressed as a linear combination of elements in $S$ in at most one way. It is clear that any free set is linearly independent.
\end{def0}
 
\begin{def0}
Let $M$ be an $R$-semimodule. A linearly independent generating set for $M$ is called a \emph{basis} for $M$ and a free generating set for $M$ is called a \emph{free basis} for $M$. A $R$-semimodule having a free basis is called a \emph{free R-semimodule}.
\end{def0}

For any commutative semiring $R$, let $\kappa(R)$ = max $\{ t \in \mathbb{N}$ $|$ the $R$-semimodule $R$ has a basis with $t$ elements $\}$.

With linear combination of elements defined over semirings, the notion of a basis is a quite natural extension. However, this as a notion is a bit subtle in this case. Firstly, for a general semimodule, bases might not have the same cardinality, i.e., two bases can have different number of elements. However, in \cite{YJT14} it is shown that if we restrict ourselves to certain semimodules over a certain class of semirings, this can be restricted, and is stated in the form of the following result, 

\begin{theorem}[Theorem 4.3 \cite{YJT14}]\label{thm:fingenbas}
For any commutative semiring $R$, the following statements are equivalent,

\begin{enumerate}
\item $\kappa(R) = 1$
\item For any $u,v \in R, 1 = u + v$ implies that either $u \in U(R)$ or $v \in U(R)$
\item Any two bases for a finitely generated free $R$-semimodule $M$ have the same cardinality. 
\end{enumerate}
\end{theorem}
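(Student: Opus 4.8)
The plan is to prove the cyclic chain of implications $(1)\Rightarrow(2)\Rightarrow(3)\Rightarrow(1)$. The implication $(3)\Rightarrow(1)$ is essentially free of content: $R$ is itself a finitely generated free $R$-semimodule, with free basis $\{1\}$, so by $(3)$ every basis of the $R$-semimodule $R$ has exactly one element, i.e.\ $\kappa(R)=1$. For $(1)\Rightarrow(2)$ I would argue by contraposition. Suppose $1=u+v$ with $u,v\notin U(R)$. Then $u\neq 0\neq v$ (if, say, $u=0$ then $v=1\in U(R)$) and $u\neq v$ (if $u=v$ then $1=(1+1)u$, so $u\in U(R)$). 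The pair $\{u,v\}$ generates $R$, since $r=r(u+v)=ru+rv$ for every $r\in R$; and $\{u,v\}$ is linearly independent, because $u=rv$ would give $1=(r+1)v$, forcing $v\in U(R)$, and symmetrically. Hence $\{u,v\}$ is a two-element basis of the $R$-semimodule $R$, so $\kappa(R)\geq 2$, contradicting $(1)$.

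All the work is in $(2)\Rightarrow(3)$. I would first record the elementary consequence of $(2)$ that $R\setminus U(R)$ is closed under addition: if $x,y\notin U(R)$ but $x+y=w\in U(R)$, then $1=w^{-1}x+w^{-1}y$, and $(2)$ makes one of $w^{-1}x,w^{-1}y$ — hence one of $x,y$ — a unit. It follows that whenever a finite sum of elements of $R$ equals a unit, at least one summand is a unit. Let $M$ be finitely generated and free; one checks it then has a finite free basis, so we may assume $M=R^{n}$ with free basis $e_{1},\dots,e_{n}$. Let $\mathcal B=\{b_{1},\dots,b_{m}\}$ be an arbitrary basis. Writing $b_{j}=\sum_{i}a_{ij}e_{i}$ and, using that $\mathcal B$ generates, $e_{i}=\sum_{j}d_{ji}b_{j}$, then substituting one expansion into the other, the uniqueness of expansions in the free basis $\{e_{i}\}$ yields the matrix identity $AD=I_{n}$, where $A=(a_{ij})$ is $n\times m$ and $D=(d_{ji})$ is $m\times n$.

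The heart of the argument is to squeeze rigidity out of $AD=I_{n}$. On the diagonal, $\sum_{j}a_{ij}d_{ji}=1$, so for each $i$ there is an index $\tau(i)$ with $a_{i,\tau(i)}d_{\tau(i),i}\in U(R)$; since a product is a unit only if both factors are, $a_{i,\tau(i)},d_{\tau(i),i}\in U(R)$. Off the diagonal, $\sum_{j}a_{ij}d_{jk}=0$ for $i\neq k$; here is the one point at which the standing hypothesis that $R$ is zero-sum-free (no nonzero element has an additive inverse, so $a+b=0$ forces $a=b=0$) enters: every summand is $0$, i.e.\ $a_{ij}d_{jk}=0$ for all $j$ whenever $i\neq k$. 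Cancelling the unit $a_{i,\tau(i)}$ from $a_{i,\tau(i)}d_{\tau(i),k}=0$ gives $d_{\tau(i),k}=0$ for $k\neq i$, and cancelling the unit $d_{\tau(i),i}$ from $a_{k,\tau(i)}d_{\tau(i),i}=0$ gives $a_{k,\tau(i)}=0$ for $k\neq i$. Hence the $\tau(i)$-th column of $A$ is supported only in row $i$, so $b_{\tau(i)}=a_{i,\tau(i)}e_{i}$ is a \emph{unit} multiple of $e_{i}$. Now the count: if $\tau(i)=\tau(i')$ with $i\neq i'$ then $a_{i,\tau(i)}e_{i}=a_{i',\tau(i')}e_{i'}$ with both coefficients units, which is impossible in the free semimodule $R^{n}$, so $\tau$ is injective and $m\geq n$; conversely, if $m>n$, pick an index $j^{*}$ not in the image of $\tau$, and using $e_{i}=a_{i,\tau(i)}^{-1}b_{\tau(i)}$ we get
\[
b_{j^{*}}=\sum_{i}a_{i,j^{*}}e_{i}=\sum_{i}\big(a_{i,j^{*}}a_{i,\tau(i)}^{-1}\big)\,b_{\tau(i)},
\]
an $R$-linear combination of the distinct elements $b_{\tau(1)},\dots,b_{\tau(n)}$, none of which is $b_{j^{*}}$, contradicting the linear independence of $\mathcal B$. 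Therefore $m=n$, equal to the size of the free basis, which is exactly $(3)$.

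The step I expect to be the main obstacle is the one in the last paragraph: a ``basis'' in this setting is only a linearly independent generating set and need not be a free basis, so expansions are not unique and there is no invertible change-of-basis matrix to exploit. The content of the proof is that the \emph{single} relation $AD=I_{n}$, together with $(2)$ and zero-sum-freeness, already forces every element of an arbitrary basis of $R^{n}$ to be a unit multiple of a coordinate vector; once that rigidity is in hand, linear independence pins down $\tau$ as a bijection and the cardinality statement falls out.
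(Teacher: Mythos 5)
The paper never proves this statement: it is imported verbatim as Theorem 4.3 of \cite{YJT14}, so your argument can only be measured against the statement and the cited source, not against an internal proof. Judged that way, your route is sound and self-contained: $(3)\Rightarrow(1)$ from the free basis $\{1\}$, $(1)\Rightarrow(2)$ by exhibiting the two-element basis $\{u,v\}$ of $R$ (the checks that $u\neq v$, that $\{u,v\}$ generates, and that $u=rv$ would force $v\in U(R)$ are all correct), and for $(2)\Rightarrow(3)$ the observation that under (2) a finite sum equal to a unit must have a unit summand, followed by the rigidity squeezed out of $AD=I_n$: a unit entry $a_{i,\tau(i)}$ in each row, annihilation of the rest of column $\tau(i)$, hence $b_{\tau(i)}$ a unit multiple of $e_i$, and then injectivity of $\tau$ plus independence of $\mathcal{B}$ give $m=n$. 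One small omission: you should record that any basis of a finitely generated semimodule is automatically finite (some finite subset of it already generates, and a leftover basis element would then lie in the span of the others, contradicting independence); you use this tacitly when writing $\mathcal{B}=\{b_1,\dots,b_m\}$.

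The substantive caveat is the one you flag yourself. Zero-sum-freeness is not part of the definition of a semiring in \cite{golan2013semirings}, which is the paper's own reference for the definition (the paper's remark that it is ``clear from the definition'' is inaccurate: every commutative ring satisfies the stated axioms), and the theorem in \cite{YJT14} is asserted for arbitrary commutative semirings. For a commutative local ring, for instance, hypothesis (2) holds, but your off-diagonal step --- from $\sum_j a_{ij}d_{jk}=0$ conclude $a_{ij}d_{jk}=0$ for every $j$ --- fails, so your argument does not deliver $(2)\Rightarrow(3)$ in that case. The implication is still true there: by (2) the non-units are closed under addition and under multiplication by arbitrary elements, hence form an ideal, so the ring is local, and a linearly independent generating set in this weak sense is exactly a generating set from which no element can be dropped, all of which have cardinality $n$ for $R^n$ by Nakayama. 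So within the present paper, which works with zero-sum-free semirings and only ever applies the theorem to the tropical semiring, your proof suffices; as a proof of the cited theorem in its stated generality, the case of semirings admitting additive inverses is a gap that needs a separate argument along the lines just indicated.
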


We refer to a semiring with $\kappa(R) = 1$ as a \textbf{semi-linear} semiring.

An important fact mentioned in \cite{YJT14} is that the tropical semiring (both max and min) is a semi-linear semiring.

For a free, finitely generated semimodule over a semi-linear semiring, we refer to the cardinality of its basis as its \emph{dimension} or \emph{rank} of the semimodule.

We recall the definition of an ideal over a semiring \cite{YJT14}.

\begin{def0}
A nonempty subset $I$ of a semiring $R$ is said to be an ideal of $R$, if $a + b \in I$ for all $a, b \in I$ and $ ra \in I$ for all $r \in R$ and $a \in I$.
\end{def0}

\begin{def0}
An ideal $I$ of a semiring $R$ is called principal if $I = \{ra : r \in R\}$ for some $a \in R$.
\end{def0}

A semiring $R$ is called a \textit{principal ideal semidomain} if $R$ is a semidomain and all its ideals are principal.

\begin{eg}
The tropical semiring $\mathbb{T} = \{ \mathbb{R} \cup -\infty, max, + \}$ is a principal ideal semidomain. Firstly, it is easy to see that $\mathbb{T}$ is a semidomain. Also, we observe that the only ideals of this semiring are, $I_{0} = \emptyset$, the empty ideal, the single element ideal $I_{1} = \{ - \infty \}$, i.e the ideal containing the element $-\infty$ and the full semiring $\mathbb{T}$. We see that $\mathbb{T} = \langle 0 \rangle$, i.e the ideal generated by $0$ is equal to the the whole semiring. Also, $I_{1} = \{ - \infty \}  = \langle - \infty \rangle$. Hence, all ideals are principal, and therefore $\mathbb{T}$ is a principal ideal semidomain.
\end{eg}

We now look for a tropical counterpart of some classical results concerning free and finitely generated modules over principal ideal domains; the proof for Theorem \ref{thm:rank} that we provide here follows in a mostly straight forward way from the classical case \cite{mcln} and is stated here for the sake of completeness. For this we consider free and finitely generated semimodules over principal ideal semidomains.

\begin{thm0}\label{thm:rank}
Let $R$ be a principal ideal semidomain, let $F$ be a free and finitely generated $R$-semimodule and let $E$ be a subsemimodule of $F$. Then $E$ is a free and finitely generated $R$-semimodule and the rank of $E$ is at most the rank of F.
\end{thm0}

\begin{proof}
We first begin with some basic observations. We know that $R$ is a free $R$-semimodule of rank 1. Consider $E$ to be a subsemimodule of the free $R$-semimodule $R$. It is clear that $E$ is an ideal and that the ideals of $R$ coincide with the subsemimodules of $R$. In case $E$ is trivial we see that $E$ is the free $R$-semimodule of rank 0. So we consider the case that $E$ is nontrivial. Since $R$ is a principal ideal semidomain we pick $w \neq 0$ so that $E$ is generated by $w$. That is
$E = \{rw | r \in R\}$. Since we know that $R$ has $\{1\}$ as a basis, we see that the map that sends $1$ to $w$ extends to a unique semimodule homomorphism from $R$ onto $E$. Indeed, notice  $h(r \cdot 1) = r \cdot h(1) = rw$ for all $r \in R$. But the homomorphism $h$ is also one-to-one since

\begin{equation*}
\begin{split}
h(r) & = h(s) \\  rh(1) & = sh(1) \\ rw & = sw \\ & r = s     
\end{split}
\end{equation*}

Therefore, we see that $E$ is isomorphic to the free $R$-module of rank $1$. Also, we conclude that subsemimodules of the free $R$-module of rank 1 are themselves free and have either rank 0 or rank 1.

Since $F$ is a free and finitely generated semimodule, therefore Theorem \ref{thm:fingenbas} implies that all basis for $F$ have the same cardinality. Let $B$ be a basis for $F$ and $C \subseteq B$. Because $F$ is not the trivial module, we see that $B$ is not empty. Let $F_{C}$ be the subsemimodule of $F$ generated by $C$. Let $E_{C} = E \cap F_{C}$. Evidently, $C$ is a basis for $F_{C}$. To see that $E_{C}$ is free and finitely generated we will have to find a basis for it.

Suppose, for a moment, that $C$ has been chosen so that $E_{C}$ is known to be free and finitely generated and consider an element $w \in B$ with $w \not \in C$. Put $D := C \cup \{w\}$. Consider the map defined on $D$ into $R$ that sends all the elements of $C$ to $0$ and that sends $w$ to $1$. This map extends uniquely to a homomorphism of semimodules $\phi$ from $F_{D}$ onto $R$ and it is easy to check that the kernel of $\phi$ is just $F_{C}$. Contrary to the case of modules, semimodules in the context of universal algebras have salient versions of isomorphism theorems involving congruence relations and we notice that Ker $\phi$ defines a congruence relation on $F_{D}$ in this case. We invoke such isomomorphism theorem for semimodules \cite[Theorem 2.6]{pareigis13} and as $\phi$ is one-one by definition we conclude that $F_{D}/F_{C}$ is isomorphic to $R$ and that it is free of rank 1. Observe that $E_{C} = E \cap F_{C} = E \cap F_{D} \cap F_{C} = E_{D} \cap F_{C}$. Again using the isomorphism theorem, we obtain a second isomorphism theorem as follows,

\[ E_{D}/E_{C} = E_{D}/E_{D} \cap F_{C} \cong E_{D} + F_{C}/F_{C} \]

But $E_{D} +F_{C}/F_{C}$ is a subsemimodule of $F_{D}/F_{C}$. This is a free R-semimodule of rank 1. We noted earlier that every subsemimodule of a free $R$-semimodule of rank 1 must be itself a free $R$-semimodule and have rank either 0 or 1. In this way, we find that either $E_{D} = E_{C}$ (in the rank 0 case) or else $E_{D}/E_{C}$ is a free $R$-semimodule of rank 1. Let us take up this latter case. Let $X$ be a basis for $E_{C}$, which we assumed, for the moment, was free and finite. Pick $u \in E_{D}$ so that $\{u/E_{C}\}$ is a basis for $E_{D}/E_{C}$.

We claim that $X \cup \{u\}$ is a basis for $E_{D}$. Suppose $x_{0}, \hdots , x_{n-1}$ are distinct elements of $X$, $r_{0}, \hdots , r_{n} \in R$ and

\[ 0 = r_{0}x_{0} + \hdots + r_{n-1}x_{n-1} + r_{n}u \]

Also,

\[ r_{n}(u/E_{C}) = r_{n}u/E_{C} = (r_{0}x_{0} + \hdots + r_{n-1}x_{n-1} + r_{n}u)/E_{C} = 0/E_{C}\]

Since ${u/E_{C}}$ is a basis for $E_{D}/E_{C}$, we must have $r_{n} = 0$. This leads to

\[ 0 = r_{0}x_{0} + \hdots + r_{n-1}x_{n-1} \]

But now since $X$ is a basis for $E_{C}$ we see that $0 = r_{0} = \hdots = r_{n-1}$. So we find that $X \cup \{u\}$ is linearly independent. To see that $X \cup \{u\}$ generates $E_{D}$, pick $z \in E_{D}$. Since ${u/E_{C}}$ is a basis for $E_{D}/E_{C}$, pick $r \in R$ so that
$z/E_{C} = ru/E_{C}$. This means that $z-ru \in E_{C}$. But $X$ is a basis of $E_{C}$. So pick $x_{0}, \hdots , x_{n-1} \in X$ and $r_{0}, \hdots , r_{n-1} \in R$ so that $z - ru = r_{0}x_{0} + \hdots + r_{n-1}x_{n-1}$. Surely this is enough to see that $z$ is in the subsemimodule generated by $X \cup \{u\}$. So this set generates $E_{D}$ and we conclude that it must be a basis of $E$.

With this preliminary setup we try to generalize our argument for finding the free and finite basis for $E$. Notice that $E = E \cap F = E \cap F_{B}$. So $E = E_{B}$. We
start with $\emptyset \subseteq B$. We observe that $F\emptyset = E\emptyset$ is the module whose sole element is $0$. It is free of rank $0$. Next we select an element $w \in B$ and form $\emptyset \cup \{w\} = \{w\}$. We find that $E_{\{w\}}$ is free of rank $0$ or rank $1$. We select other elements until finally all the elements of $B$ have been selected. At this point we have $E_{B}$ which is free and finitely generated and its rank can be no more than the total number of elements we selected, namely $|B|$ which is the rank of $F$. 

Let $\mathcal{F} = \{f | f \text{is a function with dom} \> f \subseteq B \> \text{and range f is a basis for} \> E_{dom f} \}$. We see that $\mathcal{F}$ is partially ordered by set inclusion. We note that $\mathcal{F}$ is not empty since the empty function is a member of $\mathcal{F}$. To invoke Zorn’s lemma, let $C$ be any chain included in $\mathcal{F}$. Let $h = \bigcup C$. Evidently $f \subseteq h$ for all $f \in C$. So $h$ is an upper bound of $C$. We also conclude that $h \in \mathcal{F}$. It is also evident that $\text{dom} \> h = \bigcup \> \{ \text{dom} \> f | f \in C\}$ and that $\text{range} \> h = \bigcup \> \{ \text{range} \> f | f \in C\}$. It remains to show that range $h$ is a basis for $E_{\text{dom h}}$. To see that range $h$ is a generating set, let $z$
be an arbitrary element of $E_{\text{dom h}} = E \cap F_{\text{dom h}}$. Hence $z$ must be generated by some finitely many elements belong in $\text{dom h}$. This means there are finitely many functions $f_{0}, \hdots , f_{n-1} \in C$ so that $z$ is generated by finitely many elements of $\text{dom} f_{0} \cup \hdots \cup \text{dom} f_{n-1}$. But $\text{dom} f_{0} \cup \hdots \cup \text{dom} f_{n-1}$, under rearrangement, forms a chain under inclusion. So $z \in F_{\text{dom} f_{l}}$
for some $l < n$. Hence $z \in E_{\text{dom} f_{l}}$. But $\text{range} f_{l}$ is a basis for $E_{\text{dom} f_{l}}$. Because $\text{range} f_{l} \subseteq \text{range} h$ we find that range $h$
has enough elements to generate $z$. Since $z$ was an arbitrary element of $E_{\text{dom} h}$ we conclude that range $h$ generates $E_{\text{dom} h}$. It remains to show that range $h$ is linearly independent. But range $h$ is the union of the chain $\{\text{range} f | f \in C\}$. We recall that
the union of any chain of linearly independent sets must also be linearly independent. This implies that $h$ belongs to $\mathcal{F}$. By Zorn's lemma, let $g$ be a maximal element of $\mathcal{F}$.
We are done if $\text{dom} g = B$, since then $E = E \cap F = E \cap F_{B} = E_{B} = E_{\text{dom} g}$. In which case, range $g$ would be a basis for $E$ and rank $E$ = $|\text{range} \> g| \leq | \text{dom} \> g| = |B| = \text{rank} \> F$.

Consider the possibility that dom $g$ is a proper subset of $B$. Put $C$ = dom $g$ and put $X$ = range $g$. Let $w \in B$ with $w \not\in$ dom $g$. Put $D = C \cup \{w\}$. As we have seen above, either $E_{D} = E_{C}$ or $X \cup \{u\}$ is a basis for $E_{D}$, for some appropriately chosen $u$. We can now extend $g$ to a function $g'$ by letting $g'(w)$ be any element of range $g$ in the case when $E_{D} = E_{C}$ and by letting $g'(w) = u$ in the alternative case. In this way, $g' \in \mathcal{F}$, contradicting the maximality of $g$. So we negate this possibility.

This completes the proof.

\end{proof}

We refer to a \textbf{semi-linear} semiring which is also a principal ideal semidomain as a \textbf{linear} semiring.

\begin{eg}
The tropical semiring $\mathbb{T} = \{ \mathbb{R} \cup \{ -\infty\}, \> \text{max}, \> + \}$ is a linear semiring.
\end{eg}

We now define the notion of a projective space over a semiring,

\begin{def0}\label{def:proj_space_over_semimodules}
Given a semimodule $M$ over a semiring $R$, the \textbf{projective space over a semimodule} is the quotient space of the semimodule (omitting the additive identity $0$) under scalar multiplication, omitting multiplication by the scalar additive identity $0$.
\[ R(M) = (M \setminus 0) / (R \setminus 0)    \]
\end{def0}

\begin{eg}
The \emph{tropical projective space} of dimension $d-1$ is \cite{joswig2014essentials},
\begin{equation*}
\begin{split}
 {\mathbb{T}\mathbb{P}}^{d-1} = (\mathbb{T}^{d} \setminus \mathbf{\infty}) \> / \> \mathbb{R} \cdot 1 \\
\> \text{where}, \> \infty = (\infty, \hdots , \infty)
\end{split}
\end{equation*} 
\end{eg}

As elaborated in \cite{A16}, a vector space $V$ can be attached with the notion of a corresponding projective space $\overline{V}$, in which its elements are the subspaces $U \subset V$, and each subspace $U$ has a projective dimension dim$_{p} U$  = dim $U$ - 1. Similarly, we define $\overline{V}$ to be the \textbf{associated projective space} corresponding to a free, finitely generated semimodule $V$ over a linear semiring $R$, which has its elements as subsemimodules $U$ of $V$ and the \textbf{dimension} of $U$ = \#(basis($U$)) - 1. This evidently coincides with the notion of a projective space in Definition \ref{def:proj_space_over_semimodules}, as defined earlier, and this is illustrated in the following example. 

\begin{eg}
Let us consider the semimodule $V = \mathbb{T}^{3}, \> \text{then} \> \mathbb{T}\mathbb{P}^{2} = (\mathbb{T}^{3} \setminus \{-\infty\}) / \mathbb{R} \cdot 1$. In this case we also see that for $V = \mathbb{T}^{3}$  the associated projective space consists of subsmsemimodules of $ \mathbb{T}^{3}$, like  $\mathbb{T}^{2}$ of dimension two and  $\mathbb{T}^{1}$ of dimension one. These subsemimodules correspond to the points in $\mathbb{T}\mathbb{P}^{2}$, like $\mathbb{T}^{2}$ corresponds to $(0,0,\infty) / \mathbb{R} \cdot 1$ and $\mathbb{T}^{1}$ corresponds to $(0,\infty,\infty) / \mathbb{R} \cdot 1$. We refer the reader to Figure \ref{fig:tropical_projective_plane} for a complete description of $\mathbb{T}\mathbb{P}^{2}$. We realize that the scalar multiplication in the subsemimodules in  the definition of the associated projective space is compensated by the action of  $\mathbb{R} \cdot 1$ in Definition \ref{def:proj_space_over_semimodules} and hence these two definitions coincide.
\end{eg}

Therefore, the "points" of $V$ are subspaces of projective dimension $0$ and the "lines" are subspaces of projective dimension 1. Thus the lines of $V$ become the "points" of $\overline{V}$ and the planes of $V$ become the "lines" of $\overline{V}$ \cite{A16}. 

We now take a close look at the behaviour of points and lines over a semiring . A line over a semiring $R$ is defined as a one-dimensional subsemimodule of a semimodule $V$ defined over $R$. 

In the case of the tropical plane, a tropical line is defined as a hypersurface by the linear polynomial 

\[ L \equiv p(x,y) = a \otimes x \oplus b \otimes y \oplus c,   \>\> a,b,c \in R  \] 
 
where $L$ is a one dimensional subsemimodule of $V= \mathbb{R}^{2}$ over $R = \mathbb{R}$, which is equal to the corner locus of three half rays emanating from the point $(c-a,c-b)$, which we refer as the \emph{vertex} of the tropical line, in the primitive directions of $(-1,0), (0,-1), (1,1)$ \cite{maclagan2009introduction}, as described in Figure \ref{fig:tropical_line}.

\begin{remark}
At this juncture we would also like to point the reader to \cite[Section 5.2]{joswig2014essentials} and \cite[Remark 5.2.2]{maclagan2009introduction} which relate equivalence of tropical cones and semimodules and equivalence of tropically convex sets and subsemimodules in $\mathbb{T}^{d} \> / \mathbb{R} \cdot 1$, respectively. For readers familiar with tropical convexity, our discussion regarding semimodules over semirings can also be visualized in the context of tropical convexity, at least in the case when the underlying semiring is the tropical semiring. We elaborate on the connections between our results and tropical convexity in Section \ref{sect:Section3}.
\end{remark}

\begin{figure}[H]
    \centering
    \includegraphics[scale=0.9]{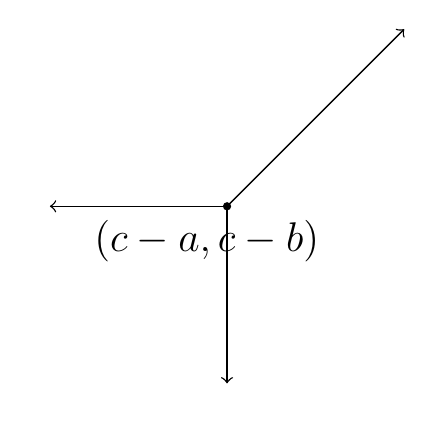}
    \caption{A tropical line}
    \label{fig:tropical_line}
\end{figure}

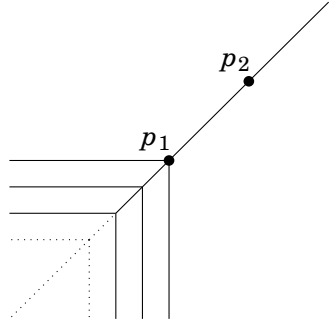
\begin{figure}
\centering
\begin{tikzpicture}[scale=0.7]
\draw[][] (0,0) -- (3,3);
\draw[][] (0,0) -- (0,-3);
\draw[][] (0,0) -- (-3,0);
\draw[][] (-0.5,-0.5) -- (-3,-0.5);
\draw[][] (-0.5,-0.5) -- (-0.5,-3);
\draw[][] (-1,-1) -- (-3,-1);
\draw[][] (-1,-1) -- (-1,-3);
\draw[dotted][] (-1.5,-1.5) -- (-3,-1.5);
\draw[dotted][] (-1.5,-1.5) -- (-1.5,-3);
\draw[][] (-1,-1) -- (0,0);
\draw[dotted][] (-1.5,-1.5) -- (-1,-1);
\draw[dotted][] (-1.5,-1.5) -- (-3,-3);
\fill[black] (0,0) circle (.1cm) node[align=left,   above]{$p_{1}\quad$};
\fill[black] (1.5,1.5) circle (.1cm) node[align=left,   above]{$p_{2}\quad$};
\end{tikzpicture}
\caption{Two points defining infinite number of tropical lines}
\label{fig:infinite_lines}
\end{figure}

A feature of such a geometric definition of a tropical line is that two tropical lines can intersect at more than one point; they can intersect over a a common half ray. Two tropical lines which have a unique intersection are said to be in \emph{general position}. Also, the tropical plane allows projective duality \cite{brandt2018incidence}, hence considering the projective dual, two points in the tropical plane can define infinite number of lines passing through them (cf. Figure \ref{fig:infinite_lines}). This leads us to the following definition \cite{brandt2018incidence},

\begin{def0}
Two points are said to be \emph{coaxial} if they lie on the same axis of a tropical line containing them.
\end{def0}

Two tropical lines are said to be coaxial if there vertices are coaxial. As is evident, this definition of coaxial lines is specific to the tropical semiring.

We also recall the definition of a \emph{stable tropical line} from \cite{tewari2020pointline},

\begin{def0}
Consider $(L, p_{1}, \hdots, p_{n}), (n \geq 2)$ where $L$ is a tropical line with the points $(p_{1}, \hdots, p_{n})$ on the line $L$, then $(L, p_{1}, \hdots, p_{n})$, is called \textbf{stable} if
\begin{enumerate}
\item either $L$ is the unique line passing through the $p_{i}$'s,  or
\item one of the points  $p_{1}, \hdots, p_{n}$ is the vertex of $L$.
\end{enumerate}
\end{def0}

\begin{figure}
    \centering
    \includegraphics[scale=0.9]{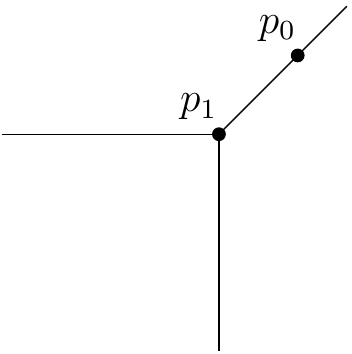}
    \caption{A stable tropical line $(L,p_{0},p_{1})$}
    \label{fig:stable_line}
\end{figure}

We would now consider a line over an arbitrary semiring. We first recall the axioms which govern incidence of points and lines classically,

\begin{axiom}[Axiom 1 \cite{A16}]
Given two points $P$ and $Q$, there exists a unique line $l$ such that $P$ lies on $l$ and $Q$ lies on $l$. 
\end{axiom}

The unique line containing $P$ and $Q$ is referred as $l = P + Q$.

Tropically, we define the following axiom which underlines point line incidence over semirings,

\begin{axiom}
Given two points $P$ and $Q$, there either exists a unique line $l$ such that $P$ lies on $l$ and $Q$ lies on $l$ or there exist infinite number of lines such that $P$ and $Q$ both lie on them. 
\end{axiom}

We can now provide an axiomatic description of \emph{coaxiality}, similar to the classical axiomatic description of \emph{collinearity}.

\begin{def0}
Given two points $P$ and $Q$, they are said to be \emph{coaxial} if there exists an infinite number of lines containing both $P$ and $Q$.
\end{def0}

We also refer to the lines containing two coaxial points $P$ and $Q$, as coaxial lines.
For two coaxial points, among the infinite number of lines containing them, coaxiality defines an equivalence relation on the infinite coaxial lines.


Given two points $P$ and $Q$, $l = P \oplus Q$ denotes either the unique line containing them or the class of infinite coaxial lines containing $P$ and $Q$. 








We now recall definitions of two classical maps regarding vector spaces over fields and their associated projective space. Let $V$ and $V'$ be two vector spaces over the fields $k$ and $k'$, and let $\mu$ be an isomorphism between $k$ and $k'$.

\begin{def0}[Definition 2.10, \cite{A16}]
A map $\lambda : V \rightarrow V'$ is called a \textbf{semi-linear} with respect to the isomorphism $\mu$ if \begin{enumerate}
    \item $\lambda(X + Y) = \lambda(X) + \lambda(Y), $
    \item $\lambda(\alpha X) = \alpha^{\mu} \lambda(X)$ for all $X,Y \in V$ and for all $\alpha \in k.$  
\end{enumerate} 
\end{def0}

We refer to the group of semilinear transformations as $\pi(V,V')$. In the case of $V=V'$, this is a group of automorphisms which contains a normal subgroup isomorphic to $k^{*}$. We refer to the quotient of $\pi(V)$ with this normal subgroup as $P\pi(V)$.

\begin{def0}[Definition 2.11, \cite{A16}]
A map $\sigma : \overline{V} \rightarrow \overline{V'}$ of the elements of a projective space $\overline{V}$ onto the elements of a projective space $\overline{V'}$ is called a \textbf{collineation} if 
\begin{enumerate}
    \item dim $V$  = dim $V'$,
    \item $\sigma$ is one to one and onto,
    \item $U_{1} \subset U_{2} \implies \sigma U_{1} \subset \sigma U_{2}$.
\end{enumerate}
\end{def0}

We now define tropical analogues of these two maps between two semimodules $V$ and $V'$ defined over two linear semirings $R$ and $R'$, which are isomorphic with respect to an isomorphism $\mu$.

\begin{def0}
A map $\lambda : V \rightarrow V'$ is called \textbf{semiring semi-linear} with respect to the isomorphism $\mu$ if 
\begin{enumerate}
    \item $\lambda(X + Y) = \lambda(X) + \lambda(Y), $
    \item $\lambda(\alpha X) = \alpha^{\mu} \lambda(X)$ for all $X,Y \in V$ and for all $\alpha \in R.$  
\end{enumerate}
\end{def0}

\begin{def0}
A map $\sigma : \overline{V} \rightarrow \overline{V'}$ of the elements of a associated projective space $\overline{V}$ onto the elements of a associated tropical space $\overline{V'}$ is called a \textbf{semiring collineation} if 
\begin{enumerate}
    \item dim $V$  = dim $V'$,
    \item $\sigma$ is one to one and onto,
    \item $U_{1} \subset U_{2} \implies \sigma U_{1} \subset \sigma U_{2}$.
\end{enumerate}
\end{def0}

\section{Main Theorem}\label{sect:Section3}



We first state the classical Fundamental Theorem of Projective Geometry as stated in \cite{A16},

\begin{thm0}[Theorem 2.26 \cite{A16}]\label{thm:classfund}
Let $V$ and $V'$ be left vector spaces of equal dimensions $n \geq 3$ over fields $k$ and $k'$ respectively, $\overline{V}$ and $\overline{V'}$ be the corresponding projective spaces. Let $\sigma$ be a one-to-one (onto) correspondence which has the following property: Whenever three distinct "points" $L_{1},  L_{2}$ and $L_{3}$ are collinear : $L_{1} \subset L_{2} + L_{3}$, then there images are collinear: $\sigma L_{1} \subset \sigma L_{2} + \sigma L_{3}$. There exists an isomorphism $\mu$ of $k$ onto $k'$ and a semi-linear map $\lambda$ of $V$ onto $V'$ (with respect to $\mu$) such that the collineation which $\lambda$ induces on $\overline{V}$ agrees with $\sigma$ on the points of $\overline{V}$. If $\lambda_{1}$ is another semi-linear map with respect to an isomorphism $\mu_{1}$ of $k$ onto $k'$ which also induces this collineation, then $\lambda_{1}(X) = \lambda(\alpha X)$ for some fixed $\alpha \neq 0$ of $k$ and the isomorphism $\mu_{1}$ is given by $x^{\mu_{1}} = {(\alpha x \alpha^{-1})}^{\mu}$. For any $\alpha \neq 0$ the map $\lambda(\alpha X)$ will be semi-linear and induce the same collineation as $\lambda$. 
\end{thm0}

With the help of definitions discussed earlier, we now state a version of Fundamental Theorem of Projective Geometry over semirings as follows,

\begin{thm0}[Fundamental Theorem of Projective Geometry over semirings]\label{thm:tropicalfund}
Let $V$ and $V'$ be free, finitely generated semimodules of equal dimensions $n \geq 3$ over linear semirings $R$ and $R'$ respectively. Let $\overline{V}$ and $\overline{V'}$ be the corresponding associated projective spaces. Let $\sigma$ be a one-to-one (onto) correspondence of the "points" of $\overline{V}$ and "points" of $\overline{V'}$ which has the following property: whenever three distinct "points" $L_{1},  L_{2}$ and $L_{3}$ are coaxial : $L_{1} \subset L_{2} \oplus L_{3}$, then there images are coaxial: $\sigma L_{1} \subset \sigma L_{2} \oplus \sigma L_{3}$. There exists an isomorphism $\mu$ of $R$ onto $R'$ and a semiring semi-linear map $\lambda$ of $V$ onto $V'$ (with respect to $\mu$) such that the semiring collineation which $\lambda$ induces on  $\overline{V}$ agrees with $\sigma$ on the points of $\overline{V}$. For any $\alpha \neq 0$ the map $\lambda(\alpha X)$ is semiring semi-linear and induces the same semiring collineation as $\lambda$.  
\end{thm0}

\begin{proof}
We begin the proof similar to the setup used in the classical case in \cite{A16} and \cite{putman}. We divide the proof in various small results,

\begin{lemma}\label{lem:lem1}
Let $v_{1}, \hdots ,v_{p} \in V$ and $\langle v_{i} \rangle = L_{i}$ (for all $ 1 \leq i \leq p) \in \overline{V}$. Also, let $v'_{1}, \hdots ,v'_{p} \in V'$ and $\langle v'_{i} \rangle = L'_{i}$ (for all $ 1 \leq i \leq p) \in \overline{V'}$. Then 

\[  \sigma( \langle v_{1}, \hdots , v_{p} \rangle)  = \langle v'_{1}, \hdots ,v'_{p} \rangle \]
\end{lemma}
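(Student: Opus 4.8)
My plan is to prove the set equality by double inclusion, exploiting the fact that $\sigma$ preserves coaxiality (the hypothesis $L_1 \subset L_2 \oplus L_3$ implies $\sigma L_1 \subset \sigma L_2 \oplus \sigma L_3$) and that $\sigma$ is a bijection on "points". First I would fix notation: write $W = \langle v_1,\dots,v_p\rangle$ for the subsemimodule generated inside $V$, and similarly $W' = \langle v'_1,\dots,v'_p\rangle$ in $V'$. The goal is $\sigma(W) = W'$, where $\sigma(W)$ is shorthand for the set of "points" $\langle w\rangle$ with $w \in W\setminus 0$ mapped under $\sigma$; more precisely I should first pin down that $\sigma$, being defined on points, acts on a subsemimodule by acting on the one-dimensional subsemimodules it contains.

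The forward inclusion is the heart of the matter. I would argue by induction on $p$. The base case $p=1$ is immediate since $\langle v_1\rangle = L_1$ and $\sigma L_1 = L'_1 = \langle v'_1\rangle$. For the inductive step, take a point $L = \langle v\rangle$ with $v \in \langle v_1,\dots,v_p\rangle$, so $v = \sum \lambda_i v_i$; splitting off the last term, $v = u + \lambda_p v_p$ with $u \in \langle v_1,\dots,v_{p-1}\rangle$. Then $\langle v\rangle$, $\langle u\rangle$, and $\langle v_p\rangle = L_p$ are coaxial in the sense that $L \subset \langle u\rangle \oplus L_p$, so coaxiality-preservation gives $\sigma L \subset \sigma\langle u\rangle \oplus \sigma L_p$. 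By the induction hypothesis $\sigma\langle u\rangle \in \langle v'_1,\dots,v'_{p-1}\rangle$ and $\sigma L_p = L'_p = \langle v'_p\rangle$, so $\sigma L$ lies in the subsemimodule generated by these, which is contained in $\langle v'_1,\dots,v'_p\rangle$. This shows $\sigma(W) \subseteq W'$ as sets of points. Applying the same reasoning to $\sigma^{-1}$ — which also preserves coaxiality since $\sigma$ is a bijection of points respecting $\oplus$ — with the roles of the $v_i$ and $v'_i$ exchanged, yields $\sigma^{-1}(W') \subseteq W$, hence $W' \subseteq \sigma(W)$, and equality follows.

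The main obstacle I anticipate is making the manipulation "$L \subset \langle u\rangle \oplus L_p$" legitimate in the semiring setting, where $\oplus$ was defined to mean either the unique line through two points or the whole coaxial class, and where the classical identity $\langle v_1,\dots,v_p\rangle = \langle v_1,\dots,v_{p-1}\rangle \oplus \langle v_p\rangle$ need not hold verbatim because of the failure of subtraction and the subtleties of linear dependence over semirings. I would need the structural input from Section 2 — in particular Theorem \ref{thm:rank} guaranteeing subsemimodules of free finitely generated semimodules are again free and finitely generated, and the coaxiality axioms — to ensure the relevant spans behave well enough that "coaxial" in the hypothesis can be applied to the triple $(L, \langle u\rangle, L_p)$. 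A secondary subtlety is the case $p = 2$ or the degenerate situations where $u = 0$ or $\lambda_p = 0$, which must be handled separately but are straightforward. Once the span decomposition is justified, the induction runs smoothly.
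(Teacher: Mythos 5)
Your route is genuinely different from the paper's. The paper disposes of this lemma in three lines: $\langle v_{1},\hdots,v_{p}\rangle$ is by definition the minimal subsemimodule of $V$ containing the points $L_{i}$, and since $\sigma$ is one-to-one, onto and (it is asserted) dimension-preserving, the image is the minimal subsemimodule of $V'$ containing the $L'_{i}$, which is $\langle v'_{1},\hdots,v'_{p}\rangle$. You instead run an induction on $p$ driven directly by the coaxiality hypothesis, writing $v = u + \lambda_{p}v_{p}$ with $u \in \langle v_{1},\hdots,v_{p-1}\rangle$ and using $\sigma L \subset \sigma\langle u\rangle \oplus \sigma L_{p}$; this is closer to the classical Artin/Putman argument, and for the forward inclusion $\sigma(W)\subseteq W'$ it does work. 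The obstacle you worry about is not really one: you never need the identity $\langle v_{1},\hdots,v_{p}\rangle = \langle v_{1},\hdots,v_{p-1}\rangle \oplus \langle v_{p}\rangle$, only the containment $\langle v\rangle \subseteq \langle u, v_{p}\rangle$, which holds in any semimodule because spans are closed under linear combinations and requires no subtraction; the degenerate cases ($\lambda_{p}=0$, $u=0$, or $\langle v\rangle$ coinciding with $\langle u\rangle$ or $\langle v_{p}\rangle$, where the three-distinct-points hypothesis cannot be invoked) are trivial, as you say.

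The genuine gap is the reverse inclusion. You assert that $\sigma^{-1}$ preserves coaxiality ``since $\sigma$ is a bijection of points respecting $\oplus$'', but Theorem \ref{thm:tropicalfund} hypothesizes only the one-way implication: coaxial triples have coaxial images. That a bijection with this one-way property has an inverse with the same property is not formal; classically this is precisely where one needs a separate argument (using $n\geq 3$, or surjectivity together with the fact that a line is recovered as the set of points collinear with two of its points) to show that $\sigma$ maps the point set of a line \emph{onto}, not merely into, the point set of a line. Without such an argument your induction only yields $\sigma(W)\subseteq W'$, not equality, and the dimension count needed to upgrade containment to equality is exactly what is missing. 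To be fair, the paper's own proof buries the same difficulty in the unproved assertion that $\sigma$ ``preserves dimension''; but if you want your argument to close, you must either prove that $\sigma^{-1}$ satisfies the coaxiality hypothesis, or show directly that the points of $\langle u', v'_{p}\rangle$ are all hit by points of $\langle u, v_{p}\rangle$.
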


\begin{proof}
The subsemimodule $\langle v_{1}, \hdots , v_{p} \rangle$, by definition is the minimal subsemimodule of V containing all $ \langle v_{i} \rangle = L_{i}$. Since $\sigma$ is a map which preserves dimension and is one to one and onto, hence $\sigma ( \langle v_{1}, \hdots , v_{p} \rangle )$ is the minimal subsemimodule of $V'$ containing each $\sigma( \langle v_{i} \rangle ) = \langle v'_{i} \rangle =  L'_{i}$. Hence, the claim.
\end{proof}

Let $\{ v_{1}, \hdots , v_{n} \}$ be a basis for $V$ and we want to construct a basis $\{ v'_{1}, \hdots , v'_{n} \}$ for $V'$. Let $v'_{1} \in V'$, such that $\sigma ( \langle v_{1} \rangle ) = \langle v'_{1} \rangle.$ This choice of $v'_{1}$ will be our only arbitrary choice; everything else will be determined by it. We now construct $\{ v'_{2}, \hdots , v'_{n} \}$.

\begin{lemma}
For $2 \leq i \leq n$, there exists a unique $v'_{i} \in V'$ such that 

\[ \sigma( \langle v_{i} \rangle ) = \langle v'_{i} \rangle \quad  \text{and} \quad \sigma( \langle v_{1} + v_{i} \rangle )  = \langle v'_{1} + v'_{i} \rangle \]

Moreover, the set $\{ v'_{1}, \hdots , v'_{n} \}$ is a basis for $V'$.
\end{lemma}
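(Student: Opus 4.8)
The plan is to mimic the classical construction (Artin's approach, as in \cite{A16}): having fixed the single arbitrary choice $v'_1$, one forces all the remaining $v'_i$ by using the collineation $\sigma$ applied both to the line $\langle v_i\rangle$ and to the ``diagonal'' line $\langle v_1+v_i\rangle$. First I would set $L_i = \langle v_i\rangle$ and $P_i = \langle v_1+v_i\rangle$, and observe that, since $\{v_1,\dots,v_n\}$ is a basis, the three points $L_1, L_i, P_i$ are coaxial (indeed $P_i \subset L_1 \oplus L_i$, since $v_1+v_i$ is a linear combination of $v_1$ and $v_i$, and by hypothesis $n\ge 3$ so there are infinitely many coaxial lines through $L_1$ and $L_i$ — here one invokes the axiom and the discussion of coaxiality preceding the theorem). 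Applying $\sigma$ and using Lemma~\ref{lem:lem1}, the images $\sigma L_1 = \langle v'_1\rangle$, $\sigma L_i$, and $\sigma P_i$ are coaxial, so $\sigma P_i \subset \langle v'_1\rangle \oplus \sigma L_i$. Now pick any generator $w'_i$ of the one-dimensional semimodule $\sigma L_i$; then $\sigma P_i$, being a one-dimensional subsemimodule of $\langle v'_1, w'_i\rangle$ distinct from $\langle v'_1\rangle$ and $\langle w'_i\rangle$, must be generated by an element of the form $\alpha v'_1 + \beta w'_i$ with $\alpha,\beta$ nonzero (this is where I use that $R'$ is a \emph{linear} semiring: $\langle v'_1, w'_i\rangle$ is free of rank $2$, so elements have unique coordinate expansions, and a rank-one subsemimodule not equal to either coordinate line is generated by a vector with both coordinates nonzero). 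Rescaling $w'_i$ by $\beta$ and then the whole pair by $\alpha^{-1}$ (legitimate since nonzero scalars in a semidomain that generate a principal ideal equal to $R'$ are units — this uses condition (2) of Theorem~\ref{thm:fingenbas} characterizing $\kappa(R')=1$, together with the semidomain property), I can arrange that $\sigma L_i = \langle v'_i\rangle$ and $\sigma P_i = \langle v'_1 + v'_i\rangle$ simultaneously.

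Next I would prove \emph{uniqueness} of $v'_i$: if $\tilde v'_i$ also satisfies both conditions, then $\langle \tilde v'_i\rangle = \langle v'_i\rangle$ forces $\tilde v'_i = c v'_i$ for some nonzero $c\in R'$, and then $\langle v'_1 + v'_i\rangle = \langle v'_1 + \tilde v'_i\rangle = \langle v'_1 + c v'_i\rangle$ forces, by comparing the (unique) coordinates in the free rank-two semimodule $\langle v'_1, v'_i\rangle$, that $c$ is a unit with $c^{-1}\cdot 1 = 1$, hence $c = 1$; so $\tilde v'_i = v'_i$. The argument again rests on unique coordinate expansions in free semimodules over a linear semiring and on the unit structure given by Theorem~\ref{thm:fingenbas}(2).

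Finally I would show $\{v'_1,\dots,v'_n\}$ is a basis of $V'$. Since $\sigma$ is a bijection preserving dimension and containment, $\sigma(\langle v_1,\dots,v_n\rangle) = \sigma(V) = V'$, while by Lemma~\ref{lem:lem1} this image equals $\langle v'_1,\dots,v'_n\rangle$; hence $\{v'_i\}$ generates $V'$. For linear independence: $V'$ is free and finitely generated over a linear semiring, so by Theorem~\ref{thm:fingenbas} all its bases have the same cardinality $n = \dim V'$; a generating set of size $n$ in such a semimodule must be a basis — if some $v'_j \in \langle \{v'_i : i\neq j\}\rangle$ then deleting it would leave a generating set of size $n-1$, and Theorem~\ref{thm:rank} applied to $V'$ (or a direct rank count) would force a basis of size $\le n-1$, contradicting $\dim V' = n$. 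Hence $\{v'_1,\dots,v'_n\}$ is a basis.

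The main obstacle I anticipate is the rescaling/normalization step: over a field one freely divides by nonzero scalars, but over a linear semiring one only knows that $u+v=1$ implies $u$ or $v$ is a unit, and that nonzero elements are cancellable. The crux is to verify that the scalars $\alpha,\beta$ arising as coordinates of a generator of the third coaxial line are in fact \emph{units} (not merely nonzero and cancellable), which is exactly what is needed to absorb them by rescaling; this will require carefully translating the coaxiality of the three image points back into an equation of the form $\alpha v'_1 + \beta w'_i = \gamma(\text{generator})$ in the free rank-two semimodule and extracting a relation like $1 = (\text{unit}) + (\text{unit})$ — or, more carefully, directly exhibiting a multiplicative inverse — from the point-line incidence structure recorded in the axioms preceding the theorem. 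Establishing this unit property cleanly, rather than hand-waving ``divide by $\alpha$'', is where the semiring-specific work lies.
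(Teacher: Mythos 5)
Your proposal follows essentially the same route as the paper's own proof: pick a generator $w_i$ of $\sigma(\langle v_i\rangle)$, use Lemma~\ref{lem:lem1} together with preservation of coaxiality to place $\sigma(\langle v_1+v_i\rangle)$ inside $\langle v'_1, w_i\rangle$, normalize a generator of that image line to the form $v'_1+\gamma_i w_i$ and set $v'_i=\gamma_i w_i$, and then deduce the basis claim from Lemma~\ref{lem:lem1} and the bijectivity of $\sigma$. The normalization step you single out as the main obstacle (showing the coefficient of $v'_1$ is a unit so it can be absorbed) is not treated in the paper either --- its proof simply asserts that a unique $\gamma_i\in R'$ exists with $\sigma(\langle v_1+v_i\rangle)=\langle v'_1+\gamma_i w_i\rangle$ --- so your write-up is, if anything, more explicit about the semiring-specific point than the paper's own argument.
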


\begin{proof}
Pick $w_{i}$s $\in V' $, such that $\sigma( \langle v_{i} \rangle ) = \langle w_{i} \rangle$. Using Lemma \ref{lem:lem1}, we have 

\[ \sigma ( \langle v_{1} + v_{i} \rangle ) \subset \sigma( \langle v_{1}, v_{i} \rangle ) = \langle v'_{1}, w_{i} \rangle    \] 

Since $\sigma( \langle v_{1} + v_{i} \rangle ) \neq  \langle w_{i} \rangle$, therefore it follows that there exists a unique $\gamma_{i} \in R'$ such that 

\[ \sigma ( \langle v_{1} + v_{i} \rangle ) =  \langle v'_{1} + \gamma_{i} w_{i} \rangle  \]

therefore the desired vector $v'_{i} = \gamma_{i} w_{i}$. To deduce that $\{ v'_{1}, \hdots , v'_{n} \}$ forms a basis, we use Lemma \ref{lem:lem1} and the fact that $\sigma$ is one to one and onto.
\end{proof}

We now try to construct the isomorphism map $\mu : R \longrightarrow R'$,

\begin{lemma}
For $2 \leq i \leq n$, there exists a unique set map $\mu : R \longrightarrow R'$ such that

\[ \sigma ( \langle v_{1} + cv_{i} \rangle ) =  \langle v'_{1} + \mu_{i}(c) \> v'_{i} \rangle \]

($c \in R$)
\end{lemma}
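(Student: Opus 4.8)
The plan is to mimic the classical construction of the field isomorphism in the Fundamental Theorem of Projective Geometry (as in \cite{A16}), adapting it so that it respects the additive monoid structure of the semiring rather than the full ring structure of a field. Fix $i$ with $2 \leq i \leq n$. For each $c \in R$, the point $\langle v_{1} + c v_{i} \rangle$ lies in $\langle v_{1}, v_{i} \rangle$, so by Lemma~\ref{lem:lem1} its image $\sigma(\langle v_{1} + c v_{i}\rangle)$ is a subsemimodule of $\langle v'_{1}, v'_{i}\rangle$ of dimension one. First I would argue that for $c \neq 0$ this image is not equal to $\langle v'_{i}\rangle$ and not equal to $\langle v'_{1}\rangle$: it differs from $\langle v'_{i}\rangle$ because $\sigma$ is injective and $\langle v_{1} + c v_{i}\rangle \neq \langle v_{i}\rangle$ (the latter because $\{v_{1},v_{i}\}$ is linearly independent in a semimodule over a semi-linear semiring, so $v_{1}+cv_{i}$ is not a scalar multiple of $v_{i}$), and similarly it differs from $\langle v'_{1}\rangle$. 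Hence there is a unique element $\mu_{i}(c) \in R'$ with $\sigma(\langle v_{1}+cv_{i}\rangle) = \langle v'_{1} + \mu_{i}(c)\, v'_{i}\rangle$; I would justify uniqueness by noting that if $\langle v'_{1} + a v'_{i}\rangle = \langle v'_{1} + b v'_{i}\rangle$ with both $a,b$ arising this way, then $v'_{1} + a v'_{i} = \lambda(v'_{1} + b v'_{i})$ for some $\lambda \in R' \setminus 0$, and comparing $v'_{1}$-coordinates (using that $\{v'_{1},v'_{i}\}$ is free) forces $\lambda = 1$ and then $a = b$. For $c = 0$ we get $\sigma(\langle v_{1}\rangle) = \langle v'_{1}\rangle$, so we set $\mu_{i}(0) = 0$, which is consistent. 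This defines a set map $\mu_{i} : R \to R'$, and its uniqueness is exactly the uniqueness of $\mu_{i}(c)$ for each $c$.

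The remaining content is that this $\mu_{i}$ is the ``same'' map for all $i$, i.e.\ that we may write $\mu$ without the subscript; in the classical proof this is deduced from a compatibility argument using the coaxiality (collinearity) hypothesis applied to the three points $\langle v_{i}\rangle$, $\langle v_{j}\rangle$, $\langle v_{i}+v_{j}\rangle$ and their interaction with $\langle v_{1}+cv_{i}\rangle$ and $\langle v_{1}+cv_{j}\rangle$. Concretely, I would show that $\langle v_{1} + c v_{i}\rangle$, $\langle v_{1} + c v_{j}\rangle$ and $\langle v_{i} - v_{j}\rangle$ are coaxial --- or rather, since subtraction is unavailable, I would work with the point $\langle v_{i}+v_{j}\rangle$ and the line through $\langle v_{1}+cv_{i}\rangle$ and $\langle v_{1}+cv_{j}\rangle$, checking that $v_{1}+cv_{i}$ and $v_{1}+cv_{j}$ together generate a subsemimodule containing a suitable multiple of $v_{i}+v_{j}$ (one identity that survives tropically is $(v_{1}+cv_{i}) + (v_{1}+cv_{j}) = 2v_{1} + c(v_{i}+v_{j})$ in an honest monoid, but the semiring has no $2$, so I would instead pass to projective classes and use the coaxial-line relation $L_{1} \subset L_{2} \oplus L_{3}$ directly). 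Applying $\sigma$ and the coaxiality-preservation hypothesis, the images $\langle v'_{1}+\mu_{i}(c)v'_{i}\rangle$ and $\langle v'_{1}+\mu_{j}(c)v'_{j}\rangle$ must satisfy the corresponding relation in $\overline{V'}$, and expanding this in the free basis $\{v'_{1},\dots,v'_{n}\}$ forces $\mu_{i}(c) = \mu_{j}(c)$. This is the step that genuinely uses $n \geq 3$: one needs a third basis vector $v_{k}$ (or the freedom to move within a higher-dimensional subspace) to set up the coaxiality relation linking the $i$ and $j$ coordinates, just as in the classical case.

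The main obstacle I anticipate is precisely this last compatibility step: over a field the argument freely uses additive inverses (forming $v_{i} - v_{j}$, subtracting coordinates, rescaling by $\alpha^{-1}$), and every such move has to be rerouted through statements about one-dimensional subsemimodules and the coaxiality relation $\oplus$, using only that $R$ and $R'$ are linear semirings (so bases have well-defined size by Theorem~\ref{thm:fingenbas}, and Lemma~\ref{lem:lem1} applies). I would handle this by consistently phrasing everything in terms of the points $\langle - \rangle$ and exploiting that in a free semimodule the $v'_{1}$-coordinate of an element determines whether a scalar is forced to be $1$. A secondary point to be careful about: the statement only asserts $\mu$ is a set map here (the multiplicativity, additivity, and injectivity are presumably established in subsequent lemmas), so in this proof I only need well-definedness and uniqueness of the values, not any algebraic structure --- which keeps this particular step short. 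I would close by remarking that the final sentence of the theorem (that $\lambda(\alpha X)$ is again semiring semi-linear inducing the same collineation) is immediate from the definition of semiring semi-linearity once $\mu$ and $\lambda$ are in hand, since replacing $\lambda$ by $X \mapsto \lambda(\alpha X)$ changes $\mu$ only by conjugation by $\alpha$, and in the commutative setting not at all on the level of induced maps on $\overline{V}$.
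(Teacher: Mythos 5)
Your first paragraph is essentially the paper's own argument: apply Lemma \ref{lem:lem1} to place $\sigma(\langle v_{1}+cv_{i}\rangle)$ inside $\langle v'_{1},v'_{i}\rangle$, rule out $\langle v'_{i}\rangle$ via injectivity of $\sigma$, and read off a unique coefficient $\mu_{i}(c)$ (your uniqueness justification via freeness of $\{v'_{1},v'_{i}\}$ is if anything cleaner than the paper's). The independence of $i$ that occupies your remaining paragraphs is not part of this statement --- the paper establishes $\mu_{i}=\mu_{j}$ in the following lemma (where it does use the problematic element $v_{i}-v_{j}$ you flag) --- so that extra material is not needed for the step at hand.
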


\begin{proof}
We define $\mu_{i}$ as follows, consider $c \in R$. We apply Lemma \ref{lem:lem1} to get 

\[ \sigma ( \langle v_{1} + c v_{i} \rangle )  \subset \sigma ( \langle v_{1}, v_{i} \rangle ) = \langle v'_{1}, v'_{i} \rangle  \]

Since $\sigma ( \langle v_{1} + c v_{i} \rangle ) \neq \langle v'_{i} \rangle $, therefore there exists a unique $\mu_{i} (c) \in R'$, such that 

\[ \sigma ( \langle v_{1} + c v_{i} \rangle )  = \langle v'_{1} + \mu_{i}(c) \> v'_{i} \rangle \]

The uniqueness of $\mu_{i}$ follows from the fact that $\mu_{i} (0_{R}) = 0_{R'}$ and $\mu_{i} (1_{R}) = 1_{R'}$, where $0_{R}$ and $0_{R'}$ represents the additive identities of the semirings $R$ and $R'$ respectively. Similarly, $1_{R}$ and $1_{R'}$ represents the multiplicative identities of the semirings $R$ and $R'$ respectively.

\end{proof}

\begin{lemma}
For distinct $2 \leq i,j \leq n$, we have $\mu_{i} = \mu_{j}$. 
\end{lemma}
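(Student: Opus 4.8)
The plan is to follow the classical proof of the Fundamental Theorem, with the one change that the difference $v_i-v_j$ used over a field---which does not exist over a semiring---is replaced by combinations built from the \emph{sum} $v_i+v_j$. Two background facts would be used throughout: $R'$ is a linear semiring, hence a semidomain, so nonzero scalars may be cancelled; and $\{v'_1,\dots,v'_n\}$ is a basis of $V'$, which (since $V'$ is free and finitely generated over a linear semiring, via Theorem \ref{thm:fingenbas}) I would take to be a \emph{free} basis, so that the coordinates of a vector of $V'$ with respect to it are unique. Since $\mu_i(0_R)=0_{R'}=\mu_j(0_R)$, it suffices to treat $c\neq 0$.

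The whole argument revolves around the single point $\langle v_1+cv_i+cv_j\rangle$, written in the three ways
\[ v_1+cv_i+cv_j=(v_1+cv_i)+cv_j=(v_1+cv_j)+cv_i=v_1+c(v_i+v_j). \]
From the first two decompositions, Lemma \ref{lem:lem1}, the coaxiality-preserving hypothesis, and the definitions $\sigma\langle v_1+cv_i\rangle=\langle v'_1+\mu_i(c)v'_i\rangle$, $\sigma\langle v_1+cv_j\rangle=\langle v'_1+\mu_j(c)v'_j\rangle$, one gets
\[ \sigma\langle v_1+cv_i+cv_j\rangle\subset\langle v'_1+\mu_i(c)v'_i,\,v'_j\rangle\cap\langle v'_1+\mu_j(c)v'_j,\,v'_i\rangle . \]
Comparing coordinates in $\{v'_1,\dots,v'_n\}$ shows that this intersection is the single point $\langle v'_1+\mu_i(c)v'_i+\mu_j(c)v'_j\rangle$ (a legitimate point, its $v'_1$-coordinate being $1\neq 0$), so that $\sigma\langle v_1+cv_i+cv_j\rangle=\langle v'_1+\mu_i(c)v'_i+\mu_j(c)v'_j\rangle$. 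On the other hand, applying Lemma \ref{lem:lem1} and the hypothesis to the third decomposition gives $\sigma\langle v_1+cv_i+cv_j\rangle\subset\langle v'_1\rangle\oplus\sigma\langle v_i+v_j\rangle$, and since $v_i+v_j\in\langle v_i,v_j\rangle$ we have $\sigma\langle v_i+v_j\rangle=\langle av'_i+bv'_j\rangle$ for some $a,b\in R'$. Comparing coordinates in the resulting membership $v'_1+\mu_i(c)v'_i+\mu_j(c)v'_j\in\langle v'_1,\,av'_i+bv'_j\rangle$ produces $\rho\neq 0$ and $t\in R'$ with $\rho\mu_i(c)=ta$ and $\rho\mu_j(c)=tb$, hence $\mu_i(c)b=\mu_j(c)a$ after cancelling $\rho$. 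Putting $c=1_R$ and using $\mu_i(1_R)=1_{R'}=\mu_j(1_R)$ forces $a=b$ (and $a\neq 0$), and then $\mu_i(c)a=\mu_j(c)a$ for all $c$ gives $\mu_i(c)=\mu_j(c)$, as required.

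The step I expect to be the main obstacle is the ``two planes meeting in exactly one point'' bookkeeping used twice above: over a semiring this cannot be carried out by elimination, so it must be phrased entirely through uniqueness of coordinates relative to $\{v'_1,\dots,v'_n\}$. I would therefore want to make sure the earlier lemma producing $\{v'_1,\dots,v'_n\}$ actually yields a free basis, and to check that each scalar cancellation invoked above is legitimate because $R'$ is a semidomain; the degenerate cases $c=0$ and $i=j$ are immediate and would be disposed of first.
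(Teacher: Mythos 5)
Your argument is correct, but it takes a genuinely different route from the paper. The paper transplants the classical field argument essentially verbatim: it considers the fixed point $\langle v_i - v_j\rangle$, notes it lies in both $\langle v'_i, v'_j\rangle$ and $\langle v'_1+\mu_i(c)\,v'_i,\, v'_1+\mu_j(c)\,v'_j\rangle$ after applying $\sigma$, computes that intersection to be $\langle \mu_i(c)\,v'_i - \mu_j(c)\,v'_j\rangle$, and concludes from the fact that the left-hand side $\sigma(\langle v_i - v_j\rangle)$ does not depend on $c$. You instead track the auxiliary point $\langle v_1+cv_i+cv_j\rangle$ through its three decompositions, pin down its image as $\langle v'_1+\mu_i(c)\,v'_i+\mu_j(c)\,v'_j\rangle$ via unique coordinates, and then exploit the same $c$-independence trick through the fixed point $\sigma\langle v_i+v_j\rangle=\langle av'_i+bv'_j\rangle$, finishing with multiplicative cancellation in the semidomain. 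The trade-off: the paper's version is shorter, but its central objects ($v_i-v_j$, and an intersection computed by elimination) presuppose additive inverses, which a semiring does not have, so it is really a proof-by-analogy with the classical case; your version uses only sums, freeness of the basis, and cancellation of nonzero scalars, so it is honestly adapted to the semiring setting. The price you pay — and you flag it yourself — is that you need $\{v'_1,\dots,v'_n\}$ to be a \emph{free} basis so that coordinates are unique; the paper's Lemma on constructing the $v'_i$ only asserts it is a basis, so if you want your coordinate comparisons to be airtight you should either strengthen that lemma or note that the paper's own intersection computations implicitly require the same freeness.
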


\begin{proof}
Consider a non-zero $c \in R$. We have 

\[ \langle v_{i} - v_{j} \rangle  \subset \langle v_{i}, v_{j} \rangle  \quad \text{and} \quad \langle v_{i} - v_{j}  \rangle \subset  \langle v_{1} + cv_{i}, v_{1} + cv_{j} \rangle  \] 

Applying Lemma \ref{lem:lem1}, we get 

\[ \sigma (\langle v_{i} - v_{j} \rangle ) \subset \langle v'_{i}, v'_{j} \rangle  \quad \text{and} \quad   \sigma ( \langle v_{i} - v_{j}  \rangle ) \subset  \langle v'_{1} + \mu_{i} (c) \> v'_{i}, v'_{1} + \mu_{j} (c) \> v'_{j} \rangle  \]

We have 

\[ \langle v'_{i}, v'_{j} \rangle \cap  \langle v'_{1} + \mu_{i} (c) \> v'_{i}, v'_{1} + \mu_{j} (c) \> v'_{j} \rangle =  \langle \mu_{i} (c) \> v'_{i} - \mu_{j} (c) \> v'_{j} \rangle  \]

Therefore, 

\[ \sigma ( \langle v_{i} - v_{j} \rangle ) = \langle \mu_{i} (c) v'_{i} - \mu_{j} (c) v'_{j} \rangle \]

since the left hand side is independent of $c$ in this equation, therefore the right hand side should also be independent of $c$. Hence,

\[ \langle v'_{i} - v'_{j} \rangle  = \langle \mu_{i} (1_{R}) v'_{i} - \mu_{j} (1_{R}) v'_{j} \rangle
 = \langle \mu_{i} (c) v'_{i} - \mu_{j} (c) v'_{j} \rangle \]
 
Therefore,

\[ \mu_{i} (c)  = \mu_{j} (c) , \quad \forall c \in R \]
\end{proof}

\begin{lemma}\label{lem:lem5}
For $c_{1}, \hdots , c_{n} \in V$, we have 

\[ \sigma ( \langle v_{1} + c_{2} v_{2} + \hdots + c_{n} v_{n} \rangle )  = \langle v'_{1} + \mu (c_{2}) \> v'_{2} + \hdots + \mu (c_{n}) \> v'_{n} \rangle \]
\end{lemma}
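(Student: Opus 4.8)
The plan is to prove Lemma \ref{lem:lem5} by induction on the number of nonzero coefficients among $c_2,\dots,c_n$, mimicking the classical argument from \cite{A16} but being careful with the subtraction-free structure of semimodules. The base cases are already handled: when all $c_i=0$ the statement is trivial, and when exactly one $c_i$ is nonzero it is the content of the previous lemma together with $\mu_i=\mu$. So I would assume the formula holds for vectors $v_1 + \sum_{i\in S} c_i v_i$ whenever $|S|\le k$ and prove it for $|S| = k+1$.

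First I would fix a subset $S = \{i_1,\dots,i_{k+1}\}$ and write $w = v_1 + c_{i_1}v_{i_1} + \dots + c_{i_{k+1}}v_{i_{k+1}}$. The idea is to realize $\langle w\rangle$ as lying in the span of two smaller configurations whose images are already known. Concretely, set $u_1 = v_1 + c_{i_1}v_{i_1} + \dots + c_{i_k}v_{i_k}$ and $u_2 = v_1 + c_{i_{k+1}}v_{i_{k+1}}$; both have at most $k$ nonzero coefficients, so the induction hypothesis applies and we know $\sigma(\langle u_1\rangle)$ and $\sigma(\langle u_2\rangle)$ explicitly. The point is that $w$ should be expressible via the "coaxial/collinear" relation with $u_1$, $u_2$ and the auxiliary vector $v_1$: one checks that $\langle w \rangle \subset \langle u_1, v_1 + c_{i_{k+1}} v_{i_{k+1}}\rangle$ and more importantly that $\langle w\rangle$ is pinned down as the unique "point" in the intersection of two planes spanned by vectors already accounted for (for instance $\langle u_1, v_{i_{k+1}}\rangle \cap \langle v_1, v_{i_1}+\dots, \dots\rangle$, with the precise auxiliary vectors chosen so that the intersection is one-dimensional). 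Applying Lemma \ref{lem:lem1} to each plane and intersecting on the $V'$ side then forces $\sigma(\langle w\rangle) = \langle v'_1 + \mu(c_{i_1})v'_{i_1} + \dots + \mu(c_{i_{k+1}})v'_{i_{k+1}}\rangle$, which is exactly the claim.

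The main obstacle I anticipate is the same one the authors have been quietly navigating throughout: in a semimodule there is no subtraction, so the classical trick of writing a vector as a difference $\mu_i(c)v'_i - \mu_j(c)v'_j$ (used freely in the preceding lemma) is not literally available, and every intersection-of-planes computation has to be arranged so that the relevant one-dimensional subsemimodule is genuinely generated by a single nonnegative linear combination. I would therefore need to check carefully that the auxiliary vectors can be chosen with all coefficients lying in $R$ (not a formal difference), using that the underlying semiring is linear so that bases behave well and that one-dimensional subsemimodules intersect as expected in a free finitely generated semimodule. In the tropical case this is fine because "differences" translate to honest tropical operations, but the write-up should either invoke the coaxiality axiom in place of the difference, or restrict the intermediate identities to the tropical semiring where the earlier lemmas were verified. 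Once the intersection computation is set up correctly, the inductive step is a routine application of Lemma \ref{lem:lem1} and the uniqueness clauses already established, so no further new ideas are needed.
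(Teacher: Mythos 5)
Your overall skeleton is the same as the paper's: induct on how much of the sum is present, use Lemma \ref{lem:lem1} together with the induction hypothesis to confine $\sigma(\langle w\rangle)$ to a span that determines every coefficient except the last one, then use a second containment built from the defining property of $\mu$ to pin down that last coefficient. The gap is precisely at the decisive second step, which you leave unspecified (``the precise auxiliary vectors chosen so that the intersection is one-dimensional''), and the one containment you do write down explicitly, $\langle w\rangle \subset \langle u_1,\, v_1 + c_{i_{k+1}}v_{i_{k+1}}\rangle$, is exactly the subtraction-dependent one: to exhibit $w$ inside that span classically you would write $w = u_1 + (v_1 + c_{i_{k+1}}v_{i_{k+1}}) - v_1$, and in a semimodule this is not available, so that containment is unjustified as stated. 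Flagging the obstacle, as you do, is not the same as resolving it, and proposing to retreat to the tropical semiring is both unnecessary and weaker than the stated lemma.

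The fix, which is what the paper's proof actually does, is to choose both auxiliary spans so that $w$ is an honest nonnegative combination of the generators. With $w = v_1 + c_{i_1}v_{i_1} + \cdots + c_{i_{k+1}}v_{i_{k+1}}$, take first $\langle u_1, v_{i_{k+1}}\rangle$ (since $w = u_1 + c_{i_{k+1}}v_{i_{k+1}}$), giving via Lemma \ref{lem:lem1} and the induction hypothesis that $\sigma(\langle w\rangle) = \langle v'_1 + \mu(c_{i_1})v'_{i_1} + \cdots + \mu(c_{i_k})v'_{i_k} + d\,v'_{i_{k+1}}\rangle$ for some $d \in R'$; then take $\langle v_1 + c_{i_{k+1}}v_{i_{k+1}},\, v_{i_1}, \ldots, v_{i_k}\rangle$ (since $w = (v_1 + c_{i_{k+1}}v_{i_{k+1}}) + c_{i_1}v_{i_1} + \cdots + c_{i_k}v_{i_k}$), whose image is $\langle v'_1 + \mu(c_{i_{k+1}})v'_{i_{k+1}},\, v'_{i_1}, \ldots, v'_{i_k}\rangle$ by Lemma \ref{lem:lem1} and the defining property of $\mu$; comparing the two containments forces $d = \mu(c_{i_{k+1}})$. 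No subtraction ever enters, and the argument goes through over an arbitrary linear semiring exactly as in the paper, so the restriction to the tropical case you contemplate is not needed. With this choice of spans made explicit, your induction (on the number of nonzero coefficients rather than on the index $p$, an inessential difference) coincides with the paper's proof.
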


\begin{proof}
We assume,

\[ \sigma ( \langle v_{1} + \hdots + c_{p} v_{p}  \rangle ) = \langle v'_{1} + \mu (c_{2}) \> v'_{2} + \hdots + \mu (c_{p}) \> v'_{p} \rangle \]

for all $2 \leq p \leq n$, by using induction on $p$. The base case is $p = 2$ and it holds true as that is the defining property of $\mu$, so we consider $2 < p \leq n$. Applying Lemma \ref{lem:lem1} and the induction hypothesis we get,

\[ \sigma ( \langle v_{1} + \hdots + c_{p} v_{p}  \rangle ) \subset \sigma ( \langle v_{1} + \hdots + c_{p-1} v_{p-1}, v_{p}  \rangle ) = \langle v'_{1} + \mu (c_{2}) \> v'_{2} + \hdots + \mu (c_{p-1}) \> v'_{p-1}, v'_{p} \rangle \] 

Moreover, $\sigma ( \langle v_{1} + c_{2} v_{2} +  \hdots c_{p} v_{p} \rangle )$ is not $ \langle v'_{p} \rangle $, so we deduce that there exists some $d \in R'$, such that 

\[ \sigma ( \langle v_{1} + c_{2} v_{2} + \hdots + c_{p} v_{p} \rangle ) = \langle v'_{1} + \mu(c_{2}) \> v'_{2} + \hdots + \mu(c_{p-1}) \> v'_{p-1} + dv'_{p} \rangle   \]

Applying Lemma \ref{lem:lem1} and defining property of $\mu$, we see 

\[ \sigma (  \langle v_{1} + c_{2} v_{2} +  \hdots + c_{p} v_{p} \rangle  )  \subset \sigma (  \langle v_{1}  + c_{p}v_{p}, v_{2}, \hdots , v_{p-1} \rangle )  = \langle v'_{1} + \mu(c_{p}) \> v'_{p}, v'_{2}, \hdots, v'_{p-1} \rangle   \]

Therefore, $d = \mu(c_{p})$.
\end{proof}

\begin{lemma}\label{lem:lem6}
For $c_{2}, \hdots , c_{n} \in R$, we have 

\[ \sigma ( \langle c_{2}v_{2} + \hdots + c_{n}v_{n} \rangle ) = \langle \mu(c_{2}) \> v'_{2} + \hdots + \mu(c_{n}) \> v'_{n} \rangle     \]

\end{lemma}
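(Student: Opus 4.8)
The plan is to deduce Lemma \ref{lem:lem6} from Lemma \ref{lem:lem5} by a limiting/specialization argument that removes the distinguished $v_1$ term. The statement of Lemma \ref{lem:lem5} describes $\sigma$ on all points of the form $\langle v_1 + c_2 v_2 + \cdots + c_n v_n\rangle$, i.e.\ points whose $v_1$-coordinate is a unit; Lemma \ref{lem:lem6} covers the remaining points, those lying in the hyperplane $\langle v_2,\ldots,v_n\rangle$. First I would reduce to the case of two coordinates: using Lemma \ref{lem:lem1} applied to $\langle c_2 v_2 + \cdots + c_n v_n\rangle \subset \langle v_2,\ldots,v_n\rangle$, together with the already-established fact that $\sigma\langle v_i\rangle = \langle v'_i\rangle$, it suffices to pin down the scalar ratios, so the core content is the case $\sigma(\langle c_i v_i + c_j v_j\rangle) = \langle \mu(c_i) v'_i + \mu(c_j) v'_j\rangle$ for a pair $i\neq j$.

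For that core case I would mimic the intersection trick used in the proof that $\mu_i=\mu_j$. Concretely, $\langle v_i + c v_j \rangle$ for $c \in R$ lies in $\langle v_i, v_j\rangle$, and one can also realize it as contained in a subsemimodule spanned by two points of the form $\langle v_1 + \cdots\rangle$ to which Lemma \ref{lem:lem5} applies: for instance $\langle v_i + c v_j\rangle \subset \langle v_1 + v_i, v_1 + v_i + c v_j \rangle$ (taking the difference of the two generators), and both generating points have unit $v_1$-coordinate, so Lemma \ref{lem:lem5} computes their $\sigma$-images as $\langle v'_1 + v'_i\rangle$ and $\langle v'_1 + v'_i + \mu(c) v'_j\rangle$. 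Intersecting $\sigma\langle v_i, v_j\rangle = \langle v'_i, v'_j\rangle$ with $\sigma$ of that two-generator span forces $\sigma(\langle v_i + c v_j\rangle) = \langle v'_i + \mu(c) v'_j\rangle$. Rescaling by $c_i$ and renaming then gives the two-coordinate statement for arbitrary nonzero $c_i$, and the degenerate cases ($c_i = 0$ or $c_j = 0$) are immediate from $\sigma\langle v_i\rangle = \langle v'_i\rangle$. Finally I would run an induction on the number of nonzero coefficients, exactly parallel to the induction in Lemma \ref{lem:lem5} but with the base point now being $\langle c_2 v_2\rangle$ instead of $\langle v_1\rangle$: split off one coordinate using Lemma \ref{lem:lem1}, use the inductive hypothesis on the remaining coordinates and the two-coordinate case on the split-off one, and match the unknown scalar by a second application of Lemma \ref{lem:lem1}.

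The main obstacle I anticipate is the one recurring subtlety in this whole circle of arguments over semirings rather than fields: the intersection identities like
\[
\langle v'_i, v'_j\rangle \cap \langle v'_1 + a v'_i,\, v'_1 + b v'_j\rangle = \langle a v'_i - b v'_j\rangle
\]
silently use subtraction and the fact that such an intersection is again a one-dimensional (cyclic) subsemimodule spanned by the ``expected'' element. Over a general linear semiring one must justify both that the intersection is nonempty and exactly cyclic, and that the element exhibiting it is forced to be a scalar multiple of $\mu(c_i) v'_i - \mu(c_j) v'_j$; since the earlier lemmas in the excerpt already commit to writing such differences, I would lean on the principal-ideal-semidomain hypothesis and Theorem \ref{thm:rank} (subsemimodules of free finitely generated semimodules are free of no larger rank) to guarantee that these intersections have rank $\le 1$, and then use the coaxiality/incidence hypothesis on $\sigma$ to identify the generator. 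If instead these difference-expressions are meant formally (as in a module with a compatible embedding), the argument goes through verbatim; either way this identification step, not the induction, is where the real work sits.
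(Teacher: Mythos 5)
Your overall strategy---sandwiching $\sigma(\langle c_2v_2+\cdots+c_nv_n\rangle)$ between two subsemimodules whose $\sigma$-images are already computable and reading off the intersection---is the same trick the paper uses, but the paper needs neither your two-coordinate reduction nor your induction: it applies the trick once to the whole sum, via the two containments $\langle c_2v_2+\cdots+c_nv_n\rangle \subset \langle v_2,\ldots,v_n\rangle$ and $\langle c_2v_2+\cdots+c_nv_n\rangle \subset \langle v_1,\; v_1+c_2v_2+\cdots+c_nv_n\rangle$, so that Lemma \ref{lem:lem1} together with Lemma \ref{lem:lem5} gives $\sigma(\langle c_2v_2+\cdots+c_nv_n\rangle)\subset \langle v'_2,\ldots,v'_n\rangle \cap \langle v'_1,\, v'_1+\mu(c_2)v'_2+\cdots+\mu(c_n)v'_n\rangle = \langle \mu(c_2)v'_2+\cdots+\mu(c_n)v'_n\rangle$ in one step.

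The genuine gap is in your core two-coordinate step. You claim $\langle v_i+cv_j\rangle \subset \langle v_1+v_i,\; v_1+v_i+cv_j\rangle$ ``by taking the difference of the two generators,'' but that difference is $cv_j$, not $v_i+cv_j$: already over a field, $\langle v_1+v_i,\, v_1+v_i+cv_j\rangle = \langle v_1+v_i,\, v_j\rangle$ does not contain $v_i+cv_j$ (comparing $v_1$- and $v_i$-coefficients gives a contradiction), and its intersection with $\langle v_i,v_j\rangle$ is $\langle v_j\rangle$. So your intersection argument, if carried out, would ``force'' $\sigma\langle v_i+cv_j\rangle=\langle v'_j\rangle$, which contradicts injectivity of $\sigma$ rather than yielding $\langle v'_i+\mu(c)v'_j\rangle$. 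The repair is to use the auxiliary span $\langle v_1,\; v_1+c_iv_i+c_jv_j\rangle$, whose generators differ by the target vector---exactly the paper's choice---and once you make that choice the same span handles the full sum directly, so the pairwise case and the induction become superfluous (your reduction from the pairwise statement back to the general one was in any case only sketched). Your closing caveat is fair: both the containment $c_2v_2+\cdots+c_nv_n\in\langle v_1,\, v_1+c_2v_2+\cdots+c_nv_n\rangle$ and the identification of the intersection implicitly use subtraction, an issue the paper's own proof shares and does not address.
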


\begin{proof}

By Lemma \ref{lem:lem1}, we have 

\[ \sigma ( \langle c_{2}v_{2} +  \hdots + c_{n}v_{n} \rangle ) \subset \sigma ( \langle v_{2}, \hdots, v_{n} \rangle ) = \langle v'_{2}, \hdots, v'_{n} \rangle  \]

also with Lemma \ref{lem:lem1} and Lemma \ref{lem:lem5}, we have 

\[ \sigma ( \langle c_{2}v_{2} +  \hdots + c_{n}v_{n} \rangle ) \subset  \sigma ( \langle v_{1},  v_{1} + c_{2}v_{2}, + \hdots + c_{n}v_{n} \rangle ) = \langle v'_{1}, v'_{1} + \mu(c_{2}) \> v'_{2} + \hdots + \mu(c_{n}) \> v'_{n} \rangle \]

and we get,

\[ \sigma ( \langle c_{2}v_{2} +  \hdots + c_{n}v_{n} \rangle ) = \langle \mu(c_{2}) \> v'_{2} + \hdots + \mu(c_{n}) \> v'_{n} \rangle  \]
\end{proof}

We now show that the map $\mu$ is an isomorphism of semirings.

\begin{lemma}\label{lem:lem7}
For $c,d \in R$ we have $\mu(c+d) = \mu(c) + \mu(d)$.
\end{lemma}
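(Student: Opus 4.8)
The plan is to reproduce the classical additivity argument (as in Artin's treatment, cf. \cite{A16}) inside $V'$, using that $n\geq 3$ supplies a third basis vector $v_3$ to act as a catalyst. The point $\langle v_1 + (c+d)v_2\rangle$ is coaxial with the pair $\langle v_1 + cv_2 + v_3\rangle$ and $\langle dv_2 - v_3\rangle$, since $(v_1+cv_2+v_3)+(dv_2-v_3)=v_1+(c+d)v_2$ lies in their span; that is, $\langle v_1+(c+d)v_2\rangle \subset \langle v_1+cv_2+v_3\rangle \oplus \langle dv_2-v_3\rangle$. Applying $\sigma$ and the coaxiality hypothesis of Theorem \ref{thm:tropicalfund} transports this relation into $\overline{V'}$.

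Next I would identify the three images. By the defining property of $\mu$ together with $\mu(0_R)=0_{R'}$, $\sigma\langle v_1+(c+d)v_2\rangle = \langle v_1' + \mu(c+d)v_2'\rangle$; by Lemma \ref{lem:lem5} together with $\mu(1_R)=1_{R'}$, $\sigma\langle v_1+cv_2+v_3\rangle = \langle v_1'+\mu(c)v_2'+v_3'\rangle$; and by Lemma \ref{lem:lem6}, $\sigma\langle dv_2-v_3\rangle = \langle \mu(d)v_2' - v_3'\rangle$. Feeding these, together with Lemma \ref{lem:lem1}, into the transported relation yields $v_1'+\mu(c+d)v_2' \in \langle v_1'+\mu(c)v_2'+v_3',\ \mu(d)v_2'-v_3'\rangle$. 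Since $\{v_1',\dots,v_n'\}$ is a basis of $V'$, the coordinates of an element of this span are uniquely determined: writing $v_1'+\mu(c+d)v_2'$ as $\rho\lambda(v_1'+\mu(c)v_2'+v_3') + \rho\nu(\mu(d)v_2'-v_3')$, the $v_1'$-coordinate forces $\rho\lambda = 1_{R'}$, the $v_3'$-coordinate forces $\rho\nu = \rho\lambda = 1_{R'}$, and the $v_2'$-coordinate then reads $\mu(c+d) = \mu(c)+\mu(d)$.

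The main obstacle is that a semiring has no subtraction, so the symbols $dv_2-v_3$ and $\mu(d)v_2'-v_3'$, the value $\mu(-1_R)$, and the cancellation "$\rho\lambda-\rho\nu=0 \Rightarrow \rho\lambda=\rho\nu$" are not literally available; I would address this in one of two ways. One option is to keep the manipulation formal, exactly as formal differences of basis vectors were already used in the earlier lemma establishing $\mu_i=\mu_j$. The cleaner, genuinely subtraction-free route — valid at least when $R$ is the tropical semiring, where addition is idempotent — is to replace the parallelogram by the identity $v_1+(c+d)v_2 = (v_1+cv_2)\oplus(v_1+dv_2)$, which gives $\langle v_1+(c+d)v_2\rangle \subset \langle v_1+cv_2, v_1+dv_2\rangle$; applying $\sigma$ and expanding in the basis of $V'$ then yields $\mu(c+d)\preceq \mu(c)\oplus\mu(d)$ for the natural order $\preceq$ on $R'$. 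For the reverse inequality, observe $c\preceq c+d$ and $d\preceq c+d$; since containments of the form $\langle v_1+ev_2\rangle\subset\langle v_1+fv_2, v_2\rangle$ encode exactly $f\preceq e$ and $\sigma$ preserves such containments (Lemma \ref{lem:lem1}), the map $\mu$ is order-preserving, so $\mu(c)\oplus\mu(d)=\sup\{\mu(c),\mu(d)\}\preceq\mu(c+d)$, and antisymmetry of the tropical order gives $\mu(c+d)=\mu(c)+\mu(d)$. I expect the delicate part of this second route to be the bookkeeping that matches the algebraic order on the semiring with the containment order on subsemimodules and verifies that the extended $\sigma$ respects it.
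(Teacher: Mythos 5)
Your main construction hinges on the vector $dv_2 - v_3$ (and its image $\mu(d)v_2' - v_3'$), and this is exactly where the argument breaks: in a semimodule over a semiring there is no subtraction, so these elements simply do not exist, and appealing to ``formal differences'' does not produce an actual element of $V$ whose span $\sigma$ can be applied to, nor does Lemma \ref{lem:lem6} say anything about it (one would also need a value $\mu(-1_R)$, which is undefined). Your fallback route repairs this only by assuming $R$ idempotent: the identity $(v_1+cv_2)\oplus(v_1+dv_2)=v_1+(c+d)v_2$ needs $v_1\oplus v_1=v_1$, and the subsequent order-theoretic argument ($\mu(c)\oplus\mu(d)=\sup\{\mu(c),\mu(d)\}$, monotonicity of $\mu$, antisymmetry) is specific to the tropical semiring, whereas Theorem \ref{thm:tropicalfund} is stated for arbitrary linear semirings. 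So as written the proposal proves the lemma only in a special case, and the general case rests on an illegal cancellation.

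The fix is a different, entirely subtraction-free decomposition, which is what the paper uses: keep $v_3$ on the left-hand side from the start and write
\[ v_1+(c+d)v_2+v_3 \;=\; (v_1+cv_2)+(dv_2+v_3), \]
so that $\langle v_1+(c+d)v_2+v_3\rangle \subset \langle v_1+cv_2,\; dv_2+v_3\rangle$ holds in any semimodule. Applying $\sigma$ and identifying the three images via Lemma \ref{lem:lem1}, Lemma \ref{lem:lem5} (for the left-hand side) and Lemma \ref{lem:lem6} (for $\langle dv_2+v_3\rangle$) gives
\[ \langle v_1'+\mu(c+d)v_2'+v_3'\rangle \subset \langle v_1'+\mu(c)v_2',\; \mu(d)v_2'+v_3'\rangle, \]
and comparing the $v_1'$- and $v_3'$-coordinates in the basis $\{v_1',\dots,v_n'\}$ forces both combination coefficients to be $1_{R'}$, whence the $v_2'$-coordinate reads $\mu(c+d)=\mu(c)+\mu(d)$. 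In other words, the role of the third basis vector is not to be cancelled against a ``$-v_3$'' (the classical parallelogram) but to be added to the target vector so that the needed coaxiality relation is witnessed by a genuine sum; once you see this, your coordinate-comparison step goes through verbatim over any linear semiring and the idempotent detour is unnecessary.
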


\begin{proof}
By Lemma \ref{lem:lem5}, we have 

\[ \sigma ( \langle v_{1} + (c+d) v_{2} + v_{3} \rangle )  = \langle v'_{1} + \mu(c+d) \> v'_{2} + v'_{3} \rangle      \] 

By combining Lemma \ref{lem:lem1} with Lemma \ref{lem:lem5} and Lemma \ref{lem:lem6}, we have

\[ \sigma ( \langle v_{1} + (c+d) v_{2} + v_{3} \rangle ) \subset \sigma ( \langle v_{1} +cv_{2}, dv_{2} + v_{3} \rangle ) = \langle v'_{1} + \mu(c) \> v'_{2}, \mu(d) \> v'_{2} + v'_{3} \rangle  \] 

combining these equations we get

\[ \langle v'_{1} + \mu(c+d) v'_{2} + v'_{3} \rangle  \subset \langle v'_{1} + \mu(c) \> v'_{2}, \mu(d) \> v'_{2} + v'_{3} \rangle   \]

this can only hold true if 

\[ \mu(c+d) = \mu(c) + \mu(d) \]
\end{proof}

\begin{lemma}\label{lem:lem8}
For $c,d \in R$, we have 

\[  \mu(cd)  = \mu(c) \cdot \mu(d) \]

\end{lemma}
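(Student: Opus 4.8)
The plan is to reproduce, in the semiring setting, the standard trick used for the classical theorem, and to keep the bookkeeping as close as possible to the additivity argument of Lemma~\ref{lem:lem7}: exhibit a single rank-one subsemimodule that is visibly contained in two different rank-two subsemimodules, one of which displays the coefficient $\mu(cd)$ and the other the product $\mu(c)\mu(d)$, and then compare coefficients against the basis $\{v'_1,\dots,v'_n\}$.

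First I would clear the degenerate cases. If $c=0_R$ or $d=0_R$, then $cd=0_R$ and, since $\mu$ sends $0_R$ to $0_{R'}$ (recorded in the construction of $\mu$) and $0_{R'}$ is absorbing in $R'$, both sides of the asserted identity equal $0_{R'}$. So assume $c$ and $d$ are nonzero. The crucial point is the identity $v_1 + c v_2 + cd\,v_3 = v_1 + c\,(v_2 + d v_3)$, which holds in $V$ by the semimodule axioms, and which gives the inclusion $\langle v_1 + c v_2 + cd\,v_3\rangle \subseteq \langle v_1,\ v_2 + d v_3\rangle$. Applying $\sigma$ and using that it is an inclusion-preserving bijection, together with Lemma~\ref{lem:lem5} on the left-hand side and Lemma~\ref{lem:lem1} combined with Lemma~\ref{lem:lem6} (plus $\mu(1_R)=1_{R'}$) on the right-hand side, yields
\[ \langle v'_1 + \mu(c)\,v'_2 + \mu(cd)\,v'_3\rangle \ \subseteq\ \langle v'_1,\ v'_2 + \mu(d)\,v'_3\rangle . \]
Membership in the right-hand subsemimodule then produces $r,s \in R'$ with
\[ v'_1 + \mu(c)\,v'_2 + \mu(cd)\,v'_3 \ =\ r\,v'_1 + s\,v'_2 + s\,\mu(d)\,v'_3 , \]
and comparing coefficients against $\{v'_1,\dots,v'_n\}$ forces $r=1_{R'}$, $s=\mu(c)$, and therefore $\mu(cd) = s\,\mu(d) = \mu(c)\,\mu(d)$.

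The step I expect to be the main obstacle is the final coefficient comparison, which must be justified carefully in the semiring context. It requires that $\{v'_1,\dots,v'_n\}$ be a \emph{free} basis of $V'$, so that a vector has a unique expression in it, and that the relation ``$\langle w\rangle \subseteq \langle a,b\rangle$'' genuinely means that $w$ is an $R'$-linear combination of $a$ and $b$ — which in turn needs $\langle v_1,\ v_2 + d v_3\rangle$ to be an honest rank-two ``line'', i.e. $\{v_1,\ v_2 + d v_3\}$ to be linearly independent, a consequence of $\{v_1,v_2,v_3\}$ being a basis and of $R$ being a linear semiring. Once these points are pinned down via Theorem~\ref{thm:fingenbas} and Theorem~\ref{thm:rank}, the computation closes, and together with Lemma~\ref{lem:lem7} this shows $\mu$ is a semiring homomorphism; injectivity and surjectivity (hence that $\mu$ is an isomorphism) follow from the corresponding properties of $\sigma$ in the same way as in the classical proof.
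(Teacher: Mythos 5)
Your proposal is correct and follows essentially the same route as the paper: the paper uses the decomposition $v_1 + cd\,v_2 + c\,v_3 = v_1 + c(d\,v_2 + v_3)$, applies $\sigma$ via Lemmas \ref{lem:lem1}, \ref{lem:lem5} and \ref{lem:lem6} to get $\langle v'_1 + \mu(cd)\,v'_2 + \mu(c)\,v'_3\rangle \subset \langle v'_1,\ \mu(d)\,v'_2 + v'_3\rangle$, and concludes by comparing coefficients, which is your argument with the roles of $v_2$ and $v_3$ interchanged. Your explicit treatment of the degenerate case $c=0$ or $d=0$ and of why the coefficient comparison is legitimate (freeness of the basis) only spells out what the paper leaves implicit in its final step.
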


\begin{proof}
By Lemma \ref{lem:lem5}, we have

\[ \sigma ( \langle v_{1} + cdv_{2} + cv_{3} \rangle ) = \langle v'_{1} +\mu(cd) \> v'_{2} + \mu(c) \> v'_{3} \rangle \] 

combining Lemma \ref{lem:lem1} and Lemma \ref{lem:lem6}, we get

\[ \sigma ( \langle v_{1} + cdv_{2} + cv_{3} \rangle ) \subset \sigma (\langle v_{1}, dv_{2} + v_{3} \rangle ) = \langle v'_{1}, \mu(d) \> v'_{2} + v'_{3} \rangle \]

combining these two equations we get 

\[ \langle v'_{1} +\mu(cd) \> v'_{2} + \mu(c) \> v'_{3} \rangle \subset \langle v'_{1}, \mu(d) \> v'_{2} + v'_{3} \rangle  \] 

the only way the this holds is iff 

\[ \mu(cd) = \mu(c) \cdot \mu(d) \]
\end{proof}

\begin{lemma}
The map $\mu : R \longrightarrow R'$ is an isomorphism of semirings.
\end{lemma}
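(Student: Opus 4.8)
The plan is to observe that Lemmas \ref{lem:lem7} and \ref{lem:lem8}, together with the normalizations $\mu(0_R)=0_{R'}$ and $\mu(1_R)=1_{R'}$ already recorded, say exactly that $\mu$ respects $+$, $\cdot$, $0$ and $1$, i.e. that $\mu$ is a homomorphism of semirings. Since a bijective semiring homomorphism automatically has a homomorphic inverse, everything then reduces to showing that $\mu$ is a bijection, and I would prove injectivity and surjectivity separately.

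Injectivity should be immediate from freeness. If $\mu(c)=\mu(d)$ then the defining property of $\mu$ gives $\sigma(\langle v_1+cv_2\rangle)=\langle v'_1+\mu(c)v'_2\rangle=\langle v'_1+\mu(d)v'_2\rangle=\sigma(\langle v_1+dv_2\rangle)$, so injectivity of $\sigma$ on points yields $\langle v_1+cv_2\rangle=\langle v_1+dv_2\rangle$. Writing $v_1+cv_2=\lambda(v_1+dv_2)$ for a nonzero $\lambda\in R$ and using that $\{v_1,\dots,v_n\}$ is a basis of the free semimodule $V$, so that coordinates in this basis are unique, one gets $\lambda=1_R$ and then $c=d$.

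For surjectivity I would run the whole construction of this section a second time with $\sigma$ replaced by its inverse $\sigma^{-1}\colon\overline{V'}\to\overline{V}$ and with the basis $\{v'_1,\dots,v'_n\}$ of $V'$ in place of $\{v_1,\dots,v_n\}$, taking $\langle v_1\rangle$ as the one arbitrary choice of a preimage of $\langle v'_1\rangle$ under $\sigma^{-1}$. Because $\sigma$ was built so that $\sigma(\langle v_i\rangle)=\langle v'_i\rangle$ and $\sigma(\langle v_1+v_i\rangle)=\langle v'_1+v'_i\rangle$, the basis of $V$ manufactured from $\sigma^{-1}$ is forced to be $\{v_1,\dots,v_n\}$ again, and the same Lemmas applied to $\sigma^{-1}$ deliver a semiring homomorphism $\nu\colon R'\to R$ characterized by $\sigma^{-1}(\langle v'_1+c'v'_2\rangle)=\langle v_1+\nu(c')v_2\rangle$. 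Applying $\sigma^{-1}$ to the identity $\sigma(\langle v_1+cv_2\rangle)=\langle v'_1+\mu(c)v'_2\rangle$ and invoking uniqueness of coordinates in $V$ as above gives $\nu(\mu(c))=c$ for all $c\in R$, and the mirror-image computation in $V'$ gives $\mu(\nu(c'))=c'$ for all $c'\in R'$; hence $\mu$ and $\nu$ are mutually inverse and $\mu$ is an isomorphism.

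The one genuinely non-formal point — and the step I expect to be the main obstacle — is the verification that $\sigma^{-1}$ again satisfies the hypotheses of the theorem, the content being that it preserves coaxiality of triples of distinct points. In the classical situation this is the assertion that the inverse of a collinearity-preserving bijection is itself collinearity-preserving, which already requires $n\ge 3$; here there is the extra wrinkle that "$L_1\subset L_2\oplus L_3$" records membership in a whole coaxial class of lines rather than in a single line. I would handle this by giving an intrinsic description of coaxiality of three distinct points $L_1,L_2,L_3$ — for instance, that the subsemimodule $\langle L_1,L_2,L_3\rangle$ they generate has dimension at most $2$ — which is manifestly symmetric under a dimension-preserving bijection, and then use the extension of $\sigma$ to finitely generated subsemimodules from Lemma \ref{lem:lem1} to conclude that $\sigma$ and $\sigma^{-1}$ both preserve this condition.
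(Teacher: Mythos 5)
Your proposal is correct in substance and, on the homomorphism half, coincides with the paper: both deduce additivity and multiplicativity from Lemmas \ref{lem:lem7} and \ref{lem:lem8} together with the normalizations $\mu(0_R)=0_{R'}$, $\mu(1_R)=1_{R'}$. Where you genuinely diverge is on bijectivity. The paper disposes of this in one line, asserting that $\mu$ is one-to-one and onto because it sends $0$ to $0$ and $1$ to $1$ -- which, as stated, is not an argument at all -- whereas you actually prove injectivity (from injectivity of $\sigma$ on points plus uniqueness of coordinates in the free semimodule, noting that equality $\langle v_1+cv_2\rangle=\langle v_1+dv_2\rangle$ forces the scalar to be $1_R$; this is worth spelling out over a semiring, where equality of cyclic subsemimodules only gives mutual scalar multiples and units cannot be divided out as freely as in a field) and surjectivity (by rerunning the construction for $\sigma^{-1}$ to produce an inverse homomorphism $\nu$ with $\nu\circ\mu=\mathrm{id}_R$ and $\mu\circ\nu=\mathrm{id}_{R'}$). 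The price of your route is exactly the point you flag: the theorem's hypothesis is only that $\sigma$ preserves coaxiality in the forward direction, so you must argue that $\sigma^{-1}$ does too before the machinery applies to it; your suggested intrinsic reformulation (three distinct points are coaxial when the subsemimodule they generate has dimension at most $2$), combined with the span-preservation of Lemma \ref{lem:lem1} and $n\ge 3$, is a reasonable way to close this, and it is the same kind of argument needed classically to see that the inverse of a collineation is a collineation. In short, your version is more work but supplies proofs the paper only asserts; the paper's version is shorter but leaves the bijectivity of $\mu$ (and hence the ``isomorphism'' in the statement) essentially unjustified.
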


\begin{proof}
We know that $\mu(0_{R}) = \mu(0_{R'})$ and $\mu(1_{R}) = \mu(1_{R'})$, therefore $\mu$ is a one-one onto map. Also, by Lemma \ref{lem:lem7} and Lemma \ref{lem:lem8} we conclude that $\mu$ is a semiring homomorphism. Hence, $\mu$ is an isomorphism of semirings.
\end{proof}

We now have constructed our basis $\{ v'_{1}, \hdots , v'_{n} \}$ for $V'$ and the isomorphism $\mu : R \longrightarrow R'$ , so we can define semiring semi-linear map $\lambda : V \longrightarrow V'$ via the formula 

\[ \lambda (c_{1} v_{1} + \hdots + c_{n} v_{n}) = \mu(c_{1}) \> v'_{1} + \hdots + \mu(c_{n}) \> v'_{n}      \quad  (c_{1}, \hdots, c_{n} \in R) \]

\begin{lemma}
The semiring semi-linear map $\lambda : V \longrightarrow V'$ induces the semiring collineation $\sigma$.
\end{lemma}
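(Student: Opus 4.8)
The plan is to unwind what it means for $\lambda$ to \emph{induce} $\sigma$: writing $\overline{\lambda}\colon\overline{V}\to\overline{V'}$ for the map $\overline{\lambda}(\langle v\rangle)=\langle\lambda(v)\rangle$, I must show (i) that $\overline{\lambda}$ is a well-defined semiring collineation and (ii) that $\overline{\lambda}$ and $\sigma$ agree on every point of $\overline{V}$. Step (i) is bookkeeping: additivity of $\lambda$ is immediate from its defining formula; the identity $\lambda(\alpha X)=\mu(\alpha)\lambda(X)$ follows coordinate-wise from Lemma~\ref{lem:lem8} (multiplicativity of $\mu$); and $\lambda$ is a bijection because it carries the basis $\{v_1,\dots,v_n\}$ of $V$ onto the basis $\{v'_1,\dots,v'_n\}$ of $V'$ while $\mu$ permutes scalars bijectively. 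Hence $\lambda$ sends a one-dimensional subsemimodule to a one-dimensional subsemimodule and preserves inclusions, and since $\dim V=\dim V'=n$, the induced map $\overline{\lambda}$ is a semiring collineation in the sense of the definition.

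For step (ii), which is the real content, I would take an arbitrary point $\langle v\rangle\in\overline{V}$, expand $v=c_1v_1+\dots+c_nv_n$ in the fixed basis, and argue by cases on the leading coefficient $c_1$. If $c_1=0$, then $v=c_2v_2+\dots+c_nv_n$ and Lemma~\ref{lem:lem6} yields $\sigma(\langle v\rangle)=\langle\mu(c_2)v'_2+\dots+\mu(c_n)v'_n\rangle=\langle\lambda(v)\rangle$, using $\mu(0_R)=0_{R'}$. If $c_1\neq0$, I would first replace the representative $v$ by $c_1^{-1}v$, a legitimate move since $\langle v\rangle$ and $\langle c_1^{-1}v\rangle$ name the same point, so that the leading coefficient becomes $1_R$; then Lemma~\ref{lem:lem5} gives $\sigma(\langle v\rangle)=\langle v'_1+\mu(c_1^{-1}c_2)v'_2+\dots+\mu(c_1^{-1}c_n)v'_n\rangle$, which equals $\langle\lambda(c_1^{-1}v)\rangle=\langle\lambda(v)\rangle$ because $\lambda(c_1^{-1}v)=\mu(c_1^{-1})\lambda(v)$ is a nonzero scalar multiple of $\lambda(v)$. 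In either case $\sigma(\langle v\rangle)=\overline{\lambda}(\langle v\rangle)$, so $\sigma$ and the collineation induced by $\lambda$ coincide on the points of $\overline{V}$, which is precisely the assertion. (The last sentence of Theorem~\ref{thm:tropicalfund}, that $X\mapsto\lambda(\alpha X)$ is again semiring semi-linear and induces the same semiring collineation for any $\alpha\neq0$, then drops out, since $\langle\lambda(\alpha X)\rangle=\langle\lambda(X)\rangle$ as projective points.)

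The step I expect to be the main obstacle --- and the one deserving real care --- is the normalisation used when $c_1\neq0$: passing from $v$ to $c_1^{-1}v$ tacitly uses that the nonzero scalar $c_1$ is invertible, and, more fundamentally, that the ``subsemimodule'' model $\overline{V}$ of the projective space is compatible with the quotient model $R(V)=(V\setminus0)/(R\setminus0)$ of Definition~\ref{def:proj_space_over_semimodules}, so that $\langle v\rangle$ and $\langle c_1^{-1}v\rangle$ really do denote the same object. For the tropical semiring this is harmless, since every nonzero element of $\mathbb{T}$ is invertible; for a general linear semiring one must either impose this as a hypothesis or reorganise Lemmas~\ref{lem:lem5} and~\ref{lem:lem6} so that every point is presented by a representative whose first nonzero coordinate is $1_R$. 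Once that reduction is in place, the rest of the argument is a direct consequence of Lemmas~\ref{lem:lem1}, \ref{lem:lem5} and~\ref{lem:lem6}.
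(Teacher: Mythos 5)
Your argument is essentially the paper's: the paper likewise combines Lemmas \ref{lem:lem1}, \ref{lem:lem5} and \ref{lem:lem6} after writing generators in the normalized forms $v_{1}+c_{2}v_{2}+\hdots+c_{n}v_{n}$ or $c_{2}v_{2}+\hdots+c_{n}v_{n}$, the only difference being that it treats an arbitrary subsemimodule $U=\langle u_{1},\hdots,u_{p}\rangle$ at once rather than a single point, and it omits your step (i). Your caveat about rescaling by $c_{1}^{-1}$ is well taken, since the paper's normalized-generator decomposition silently makes the same invertibility assumption, which holds for the tropical semiring but is not guaranteed by the definition of a linear semiring.
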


\begin{proof}
Consider a subspace $U$ of $V$. We can write $U = \langle u_{1}, \hdots, u_{p} \rangle$, where each $u_{i}$ is either of the form $v_{1} + c_{2}v_{2} + \hdots + c_{n}v_{n}$ or of the from $c_{2}v_{2} + \hdots + c_{n}v_{n}$ for some $c_{2}, \hdots, c_{n} \in R$. Combining Lemma \ref{lem:lem1}, Lemma \ref{lem:lem5} and Lemma \ref{lem:lem6}, we see that $\sigma(U) = \langle \lambda(u_{1}), \hdots, \lambda(u_{p}) \rangle $, as desired.
\end{proof}

With all the above ten lemmas we complete the proof of the main theorem.
\end{proof}

The statement of Theorem \ref{thm:classfund} is very general and mostly in literature it is stated in the case when $k = k'$ and $V = V'= k^{n}$ and the result is stated in the following form,

\begin{thm0}\label{thm:alternatefund}
If $k$ is a field and $n \geq 3$, \text{Aut}$(\overline{V}) = P\pi(V) $
\end{thm0}

Similarly, when we consider in the case of semirings that $R = R'$ and $V = V'= R^{n}$, then with Theorem \ref{thm:tropicalfund}, we have

\begin{thm0}\label{thm:alternatefund_trop}
If $R$ is a semiring and $n \geq 3$, \text{Aut}$(\overline{V}) = P\pi'(V)$
\end{thm0}

where $\pi'(V)$ refers to the group of semiring semi-linear automorphisms on $V$ and $P\pi'(V)$ is the quotient of $\pi'(V)$ with the group of automorphisms of $R$.

We now highlight some connections between our results on semimodules and \emph{tropical convexity}, wherein the underlying semiring is the tropical semiring $\mathbb{T} = \{ \mathbb{R} \cup \{- \infty \}, \text{max}, + \}$. We recall some basic definitions concerning tropical convexity \cite{maclagan2009introduction},


\begin{def0}
A subset $S$ of $\mathbb{T}^{n}$ is \emph{tropically convex} if $x,y \in S$ and $a,b \in T$ implies $a \odot x \oplus b \odot y \in S$. The \emph{tropical convex hull} of a given subset $V \subset \mathbb{T}^{n}$ is the smallest tropically convex subset of $\mathbb{T}^{n}$ that contains $V$.  
\end{def0}

A \emph{tropical polytope} is the tropical convex hull of a finite subset $V$ in the tropical projective space. 

In \cite{matrix2018tropical}, the focus is studying on tropical matrix groups, especially over the finitary tropical semiring $\mathbb{F}\mathbb{T} = ( \mathbb{R}, max, +) $ and they consider tropical polytopes over this semiring and prove a similar result as Theorem \ref{thm:tropicalfund} in the case of this semiring,

\begin{thm0}[Theorem 5.4, \cite{matrix2018tropical}]\label{thm:finite_trop}
Every automorphism of a projective n-polytope in $\mathbb{F}\mathbb{T}^{n}$
\begin{itemize}
    \item extends to an automorphism of $\mathbb{F}\mathbb{T}^{n}$; and 
    \item is a (classical) affine linear map.
\end{itemize}
\end{thm0}

Since, all projective tropical polytopes are free and finitely generated, hence the statement of  Theorem \ref{thm:finite_trop} can be seen as a special case of the general statement in Theorem \ref{thm:tropicalfund} when the underlying linear semiring is $\mathbb{F}\mathbb{T}$.

We now look at case when the semirings $R = R'$ and $V = V'$ in Theorem \ref{thm:tropicalfund}, and one such example is the following,

\begin{eg}
We consider the case $R = R' = \mathbb{T} = \{ \mathbb{R} \cup \{ \infty\}, \text{max}, + \}$ and $V = V' = \mathbb{T}^{2}$. In this case, we obtain $\overline{V} = \overline{V'} = \mathbb{T}\mathbb{P}^{2} = (\mathbb{T}^{3} \setminus \{-\infty\}) / \mathbb{R} \cdot 1$, which is referred as the \emph{tropical projective plane} and is an example of a \emph{tropical toric variety} \cite{maclagan2018tropical}\cite{joswig2014essentials}.

We note that the map -log is a homeomorphism from ${\mathbb{R}}^{d}_{\geq 0}$ to $\mathbb{T}^{d}$ when taken componentwise,

\begin{equation*}
\begin{split}
\text{-log}: \mathbb{R}^{d}& \longrightarrow \mathbb{T}^{d} \\
    (v_{1}, \hdots , v_{n})& \longrightarrow  (\text{-log}(v_{1}), \hdots , \text{-log}(v_{n}))
\end{split}
\end{equation*}           

under this map the standard basis vector $e_{k} = (0, \hdots , 0,  1, 0,  \hdots, 0)$ is mapped to 
 
\[ e^{\text{trop}}_{k} = ( \infty, \hdots , \infty, 0, \infty, \hdots , \infty) \]

which provides us the description in Figure \ref{fig:tropical_projective_plane} of $\mathbb{T}\mathbb{P}^{2}$ \cite{joswig2014essentials}.

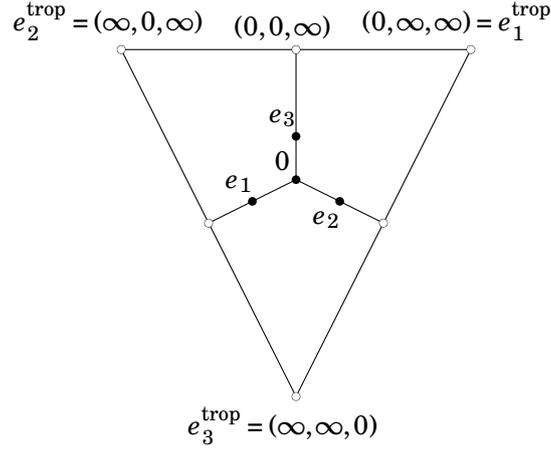
\begin{figure}[H]
\centering
\begin{tikzpicture}[scale=1.15]
\draw[][] (-2,0) -- (2,0);
\draw[][] (0,-4) -- (-2,0);
\draw[][] (2,0) -- (0,-4);
\draw[][] (0,0) -- (0,-1.5);
\draw[][] (0,-1.5) -- (1,-2);
\draw[][] (-1,-2) -- (0,-1.5);
\fill[black] (0,-1) circle (.05cm) node[align=left,   above]{$e_{3}\quad$};
\fill[black] (0,-1.5) circle (.05cm) node[align=right,   above]{$0\quad$};
\fill[black] (-0.5,-1.75) circle (.05cm) node[align=left,   above]{$e_{1}\quad$};
\fill[black] (0.5,-1.75) circle (.05cm) node[align=left,   below]{$e_{2}\quad$};
\fill[black] (0,0) circle (.05cm) node[black,align=left,   above]{$(0,0, \infty)\quad$};
\fill[white] (0,0) circle (.045cm) node[black,align=left,   above]{};
\fill[black] (-2,0) circle (.05cm) node[black,align=left,   above]{$e^{\text{trop}}_{2} = (\infty, 0 , \infty)\quad$};
\fill[white] (-2,0) circle (.045cm) node[black,align=left,   above]{};
\fill[black] (2,0) circle (.05cm) node[black,align=left,   above]{$(0, \infty, \infty) = e^{\text{trop}}_{1}\quad$};
\fill[white] (2,0) circle (.045cm) node[black,align=left,   above]{};
\fill[black] (-1,-2) circle (.05cm) node[black,align=left,   above]{};
\fill[white] (-1,-2) circle (.045cm) node[black,align=left,   above]{};
\fill[black] (1,-2) circle (.05cm) node[black,align=left,   above]{};
\fill[white] (1,-2) circle (.045cm) node[black,align=left,   above]{};
\fill[black] (0,-4) circle (.05cm) node[black,align=left,   below]{$e^{\text{trop}}_{3} = (\infty, \infty, 0)\quad$};
\fill[white] (0,-4) circle (.045cm) node[black,align=left,   below]{};
\end{tikzpicture}
\caption{A pictorial description of $\mathbb{T}\mathbb{P}^{2}$}
\label{fig:tropical_projective_plane}
\end{figure}


\end{eg}


We see from the description of $\mathbb{T}\mathbb{P}^{2}$ in Figure \ref{fig:tropical_projective_plane} that all the automorphisms of this space are just permutations of the coordinate directions. This complements our result in Theorem \ref{thm:tropicalfund}, describing the structure of each automorphism of the automorphism group of $\mathbb{T}\mathbb{P}^{2}$ as a conjugation of a semiring semi linear map of $\mathbb{T}^{2}$ and an automorphism of the semiring $\mathbb{T}$.

We highlight the fact that the space $\overline{V}$ in Theorem \ref{thm:alternatefund} is an example of a \emph{Tits Building}. We elaborate on this connection in Section \ref{sec:section5} and how we could explore possible tropical equivalents of classical buildings that might arise based on the results that we prove in this article. 


\section{Projectivity in the tropical plane}\label{sect:section4}

We recall some definitions from classical projective geometry concerning pencils of points and lines in the plane,

\begin{def0}
Given two lines $l_{1}$ and $l_{2}$ and a point $P$ not lying on both of them, a \textbf{perspectivity} is a bijective mapping between the pencils of points on $l_{1}$ and $l_{2}$, such that the lines incident with the corresponding points of the two pencils is concurrent at $P$. The point $P$ is referred as the center of the perspectivity.
\end{def0}

If $A,B$ and $C$ is the pencil of points on $l_{1}$ and $A',B'$ and $C'$ is the pencil of points on $l_{2}$ such that the point $P$ is the center of the perspectivity between them, then we represent this as 

\[  ABC \>\>     \stackrel{P}{\doublebarwedge} \>\> A'B'C'  \]
\begin{def0}
A finite composition of two or more perspectivities is called a \textbf{projectivity}.
\end{def0}

 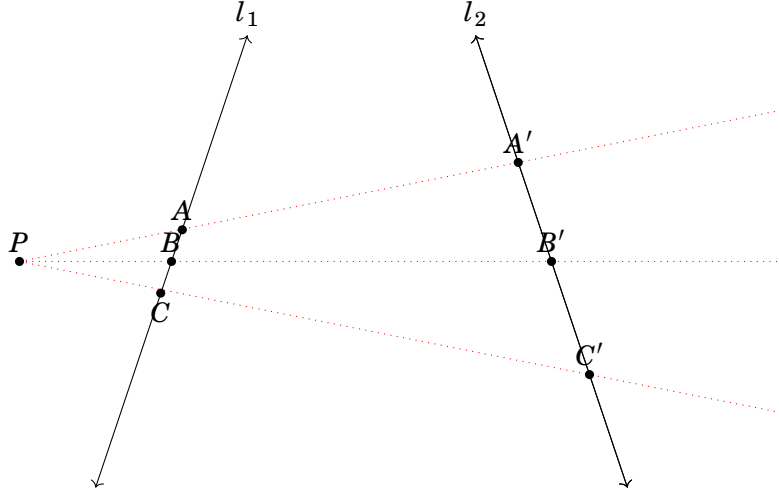
\begin{figure}[th]\centering
    \begin{tikzpicture}[scale=1]
      \draw[][<->] (1,3) -- (-1,-3);
      \draw[][<->] (4,3) -- (6,-3);
      \draw[][<->] (4,3) -- (6,-3);
      \draw[dotted,red][] (-2,0) -- (8,0);
      \draw[dotted, red][] (-2,0) -- (8,2);
      \draw[dotted, red][] (-2,0) -- (8,-2);
      \fill[black] (1,3) circle (.000000001cm) node[align=left,   above]{$l_{1}$};
      \fill[black] (4,3) circle (.0000001cm) node[align=left,   above]{$l_{2}$};
      \fill[black] (-2,0) circle (.06cm) node[align=left,   above]{$P$};
      \fill[black] (0.142,0.42) circle (.06cm) node[align=left,   above]{$A$};
      \fill[black] (0,0) circle (.06cm) node[align=right,   above]{$B$};
      \fill[black] (-0.142,-0.42) circle (.06cm) node[align=left,   below]{$C$};
      \fill[black] (4.5625,1.3125) circle (.06cm) node[align=left,   above]{$A'$};
      \fill[black] (5,0) circle (.06cm) node[align=left,   above]{$B'$};
      \fill[black] (5.5,-1.5) circle (.06cm) node[align=left,   above]{$C'$};
    \end{tikzpicture}
    \caption{A example of a classical perspectivity}
    \label{fig:classical_perspectivity}
  \end{figure}

We now consider the following classical result, termed as the fundamental theorem involving projectivity of lines in the plane \cite[Theorem 4.8]{CB04}

\begin{thm0}\label{thm:uniq_proj}
A projectivity between two pencils is uniquely determined by three pairs of corresponding points. 
\end{thm0}
 
This result rests on the following axiom \cite[Axiom 6]{CB04}

\begin{axi0}\label{axiom:ax1}
If a projectivity on a pencil of points leaves three distinct points of the pencil invariant, it leaves every point of the pencil invariant.
\end{axi0}

We now define the set of corresponding definitions in the tropical plane,




\begin{def0}
We say that $l(p,q,r)$ is a \textbf{tropical pencil} of points with a line $l$ and $n = p + q + r$ points, if $l$ is a tropical line with $p$ points on the $(1,1)$ half ray, $q$ points on the $(-1,0)$ half ray  and $r$ points on the $(0,-1)$ half ray. 
\end{def0}

\begin{def0}
We define $l(\overline{a},\overline{b},\overline{c})$ to be the \textbf{reduced tropical pencil} derived from a tropical pencil $l(p,q,r)$, where $l$ is the tropical line, along with one point out of all coaxial points on each of its half rays as a representative from the three families of coaxial points, as illustrated in the Figure \ref{fig:red_pencil}.
\end{def0}

 \begin{figure}[H]\centering
    \begin{tikzpicture}[scale=0.4]
      \draw[][->] (0,0) -- (-6,0);
      \draw[][->] (0,0) -- (0,-6);
      \draw[][->] (0,0) -- (6,6);
      
      \draw[][<->] (5,0) -- (7,0);
      \fill[blue] (-1,0) circle (.1cm) node[align=left,   above]{$a_{1}$};
      \fill[blue] (-2,0) circle (.1cm) node[align=left,   above]{$a_{2}$};
      \fill[blue] (-5,0) circle (.1cm) node[align=left,   above]{$a_{q}$};
      \draw[dotted][->] (-2.6,0.3) -- (-4.2,0.3);
      \fill[green] (0,-1) circle (.1cm) node[label=right:$b_{1}$]{};
      \fill[green] (0,-2) circle (.1cm) node[label=right:$b_{2}$]{};
      \fill[green] (0,-5) circle (.1cm) node[label=right:$b_{r}$]{};
      \draw[dotted][->] (0.3,-2.6) -- (0.3,-4.2);
      \fill[red] (1,1) circle (.1cm) node[align=left,   above]{$c_{1}$};
      \fill[red] (2,2) circle (.1cm) node[align=left,   above]{$c_{2}$};
      \fill[red] (5,5) circle (.1cm) node[align=left,   above]{$c_{p}$};
      \draw[dotted][->] (2.3,2.6) -- (4.6,4.9);

      \draw[][->] (16,0) -- (10,0);
      \draw[][->] (16,0) -- (16,-6);
      \draw[][->] (16,0) -- (22,6);
      
    \fill[blue] (13,0) circle (.2cm) node[align=left,   above]{$\overline{a}$};  
    \fill[green] (16,-3) circle (.2cm) node[label=right:$\overline{b}$]{};
    \fill[red] (19,3) circle (.2cm) node[align=left,   above]{$\overline{c}$};
    \end{tikzpicture}
    \caption{A reduced tropical pencil}
    \label{fig:red_pencil}
  \end{figure}
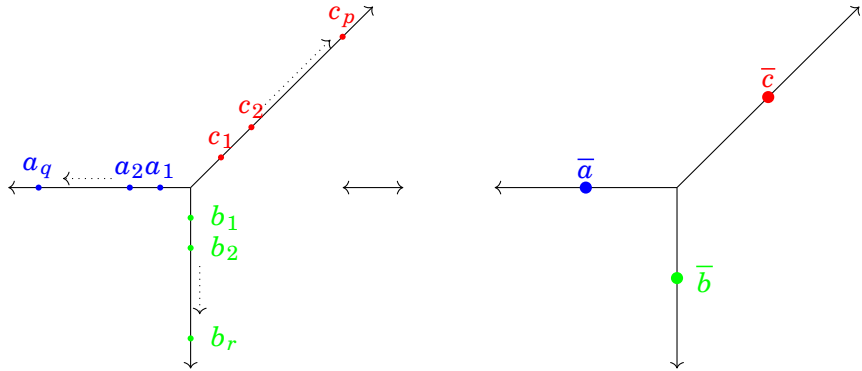

In Figure \ref{fig:red_pencil}, $\overline{a} = a_{i}$ for some $i$ such that $1 \leq i \leq q$, 
$\overline{b} = b_{j}$ for some $j$ such that $1 \leq j \leq r$ and $\overline{c} = c_{k}$ for some $k$ such that $1 \leq k \leq p$. We also observe that any reduced tropical pencil has at most three points. Essentially, a reduced tropical pencil is obtained by using coaxiality as an equivalence relation among points, and considering one representative from each equivalence class of coaxial points.

\begin{def0}
Two tropical pencils $l_{1}(p_{1},q_{1},r_{1})$ and $l_{2}(p_{2},q_{2},r_{2})$ are said to be \textbf{compatible} if the lines $l_{1}$ and $l_{2}$ are non-coaxial and $p_{1} = p_{2}, \> q_{2} = q_{2}, \> r_{2} = r_{2}$.
\end{def0}

 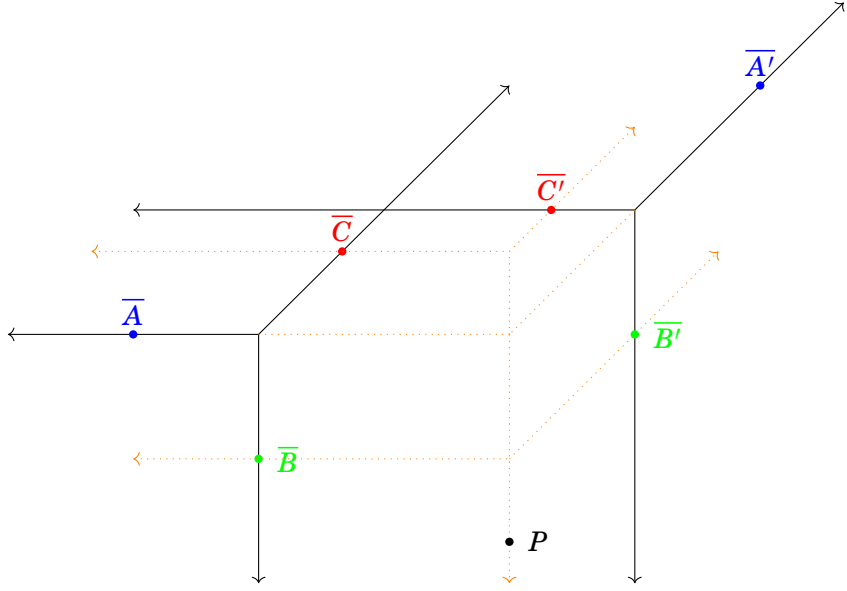
\begin{figure}[ht!]\centering
    \begin{tikzpicture}[scale=0.55]
      \draw[][->] (0,0) -- (-6,0);
      \draw[][->] (0,0) -- (0,-6);
      \draw[][->] (0,0) -- (6,6);
      
      \fill[blue] (-3,0) circle (.1cm) node[align=left,   above]{$\overline{A}$};
      \fill[green] (0,-3) circle (.1cm) node[label=right:$\overline{B}$]{};
      \fill[red] (2,2) circle (.1cm) node[align=left,   above]{$\overline{C}$};
      
      
      \draw[][<-] (-3,3) -- (9,3);
      \draw[][->] (9,3) -- (9,-6);
      \draw[][->] (9,3) -- (14,8);
      
      \draw[dotted,orange][<-] (-4,2) -- (6,2);
      \draw[dotted,orange][] (9,3) -- (6,0);
      \draw[dotted,orange][] (0,0) -- (6,0);
      \draw[dotted,orange][->] (6,0) -- (6,-6);
      \draw[dotted,orange][<-] (-3,-3) -- (6,-3);
      \draw[dotted,orange][->] (6,-3) -- (11,2);
      \draw[dotted,orange][] (6,0) -- (6,2);
      \draw[dotted,orange][->] (6,2) -- (9,5);
      
      \fill[green] (9,0) circle (.1cm) node[label=right:$\overline{B'}$]{};
      \fill[blue] (12,6) circle (.1cm) node[align=left,   above]{$\overline{A'}$};
      \fill[black] (6,-5) circle (.1cm) node[label=right:${P}$]{};
      \fill[red] (7,3) circle (.1cm) node[align=left,   above]{$\overline{C'}$};
    \end{tikzpicture}
    \caption{A tropical perspectivity}
    \label{fig:trop_perspec}
  \end{figure}

We now define tropical counterparts to perspectivity and projectivity,

\begin{def0}
Given two reduced tropical pencils $l(\overline{A},\overline{B},\overline{C})$ and $l'(\overline{A'},\overline{B'},\overline{C'})$ which are non-coaxial and a point $P$ not on both of them, a \textbf{tropical perspectivity} is a bijective mapping between the points on $l$ and $l'$, such that the tropical lines incident with the corresponding points of the two pencils is concurrent at $P$.
\end{def0}

In Figure \ref{fig:trop_perspec}, we see a tropical perspectivity which maps $\overline{A}$ to $\overline{A'}$, $\overline{B}$ to $\overline{B'}$ and $\overline{C}$ to $\overline{C'}$, i.e,

\[   \overline{A} \> \overline{B} \> \overline{C} \>\>     \stackrel{P}{\doublebarwedge} \>\> \overline{A'} \> \overline{B'} \> \overline{C'}   \] 

\begin{remark}
We observe that a tropical perspectivity always maps at most three points from one line to the other. 
\end{remark}

\begin{def0}
A \textbf{tropical projectivity} is a bijective mapping between compatible tropical pencils obtained as a finite composition of tropical perspectivities.
\end{def0}

We now establish a tropical counterpart to Theorem \ref{thm:uniq_proj},

\begin{thm0}\label{thm:uniq_proj_trop}
A tropical projectivity between two compatible tropical pencils is uniquely determined (up to equivalence) by three pairs of corresponding points. 
\end{thm0}

\begin{proof}

We consider two compatible tropical pencils  $l_{1}(p,q,r)$ and $l_{2}(p',q',r')$, and consider three points $A,B,C$ and $A', B', C'$ on $l_{1}$ and $l_{2}$ respectively such that the corresponding reduced tropical pencils are  $l_{1}(\overline{A},\overline{B},\overline{C})$ and $l_{2}(\overline{A'},\overline{B'},\overline{C'})$. We consider in our construction that the chosen six points lie on different rays of the given tropical line, however as we will see in the latter part of the proof, the argument for the other cases is also similar to the one we consider here. 

We begin by a constructive approach to show the desired projectivity exists and the argument for the uniqueness would follow. Consider the points $\overline{A},\overline{B}$ and $\overline{C}$ on $l_{1}$  and $\overline{A'},\overline{B'}$ and $\overline{C'}$ on $l_{2}$, as shown in Figure \ref{fig:perspectivity}. We construct the tropical line between the points $A$ and $A'$ and we choose a point $p'$ on this line. Let $l'$ be an arbitrary line passing through $A'$. Then $B_{1} = Bp' \cdot l' $, i.e., $B_{1}$ is the point of stable intersection between the lines $l'$ and the tropical line passing through $B$ and $p'$. We observe that there exists a tropical perspectivity between $l_{1}$ and $l'$, defined as follows  

\begin{equation}\label{eq:perspectivity1}
  ABC \>\>  \stackrel{p'}{\doublebarwedge} \>\> A'B_{1}C      
\end{equation}

where $A$ is mapped to $A'$, $B$ is mapped to $B_{1}$ and $C$ is mapped to itself via $p'$.

Subsequently, we consider the point $p' = CC' \cdot B_{1}B'$, a point lying on the intersection between the tropical line passing through $C$ and $C'$ and the tropical line passing through $B_{1}$ and $B'$. We see that we obtain another tropical perspectivity via $p'$ where

\begin{equation}\label{eq:perspectivity2}
  A'B_{1}C \>\>  \stackrel{p"}{\doublebarwedge} \>\> A'B'C'    
\end{equation}

 $A$ is mapped to itself, $B_{1}$ is mapped to $B'$ and $C$ is mapped to itself via $C'$.

Using the equations \ref{eq:perspectivity1} and \ref{eq:perspectivity2} we obtain a tropical projectivity,

\begin{equation}
  ABC \>\> \barwedge \>\> A'B'C'   
\end{equation}

where the $p$ points on the $(1,1)$ ray of $l_{1}$ are mapped to the $p' = p$ points on the $(1,1)$ ray of $l_{2}$ via the reduced pencil and similarly the $q$ points on the $(-1,0)$ ray of $l_{1}$ are mapped to the $q' = q$ points on the $(-1,0)$ ray of $l_{2}$ and the $r$ points on the $(0,-1)$ ray of $l_{1}$ are mapped to the $r' = r$ points on the $(0,-1)$ ray of $l_{2}$. Since, the choice of the six points $A,B,C,A'B'$ and $C'$ fixes the projectivity, we have the desired result.

 \begin{figure}\centering
    \begin{tikzpicture}[scale=0.75]
      \draw[][->] (0,0) -- (-4,0);
      \draw[][->] (0,0) -- (0,-4);
      \draw[][->] (0,0) -- (6,6);
      \draw[dotted,blue][] (0,0) -- (5,0);
      \draw[dotted,blue][->] (5,0) -- (5,-4);
      \draw[dotted,blue][->] (5,0) -- (7,2);
      \draw[dotted,blue][->] (5,-2) -- (-4,-2);
      \draw[dotted,blue][->] (5,-2) -- (12,5);
      \draw[][<-] (-4,2) -- (5,2);
      \draw[dotted,blue][] (5,2) -- (5,0);
      \draw[dotted,blue][->] (5,2) -- (9,6);
      \draw[][] (5,2) -- (7,2);
      \draw[][->] (7,2) -- (7,-4);
      \draw[][->] (7,2) -- (11,6);
      \fill[black] (0,0) circle (.0000001cm) node[align=left,   above]{$l_{1}$};
      \fill[blue] (-2,0) circle (.1cm) node[align=left,   above]{$A$};
      \fill[green] (0,-2) circle (.1cm) node[label=right:$B$]{};
      \fill[red] (2,2) circle (.1cm) node[align=right,   above]{$C$};
      \fill[blue] (9,4) circle (.1cm) node[align=right,   above]{$A'$};
      \fill[green] (7,0) circle (.1cm) node[align=right,   above]{$B_{1}$};
      \fill[black] (5,-3) circle (.1cm) node[align=right,   above]{$p'$};
      \fill[green] (12,2) circle (.1cm) node[label=right:$B'$]{};
      \fill[red] (15,7) circle (.1cm) node[align=right,   above]{$C'$};
      \fill[black] (10,-3) circle (.1cm) node[align=left,   above]{$p"$};
      \fill[black] (12,4) circle (.0000001cm) node[align=left,   above]{$l_{2}$};
      \fill[black] (11,6) circle (.000001cm) node[align=left,   above]{$l'$};
      \fill[black] (7,-4) circle (.000001cm) node[align=left,   below]{$l'$};
      \fill[black] (-4,2) circle (.000001cm) node[align=left,   above]{$l'$};
      \draw[][->] (12,4) -- (-2,4);
      \draw[][->] (12,4) -- (12,-4);
      \draw[][->] (12,4) -- (16,8);
      \draw[dotted,red][] (12,4) -- (10,2);
      \draw[dotted,red][] (7,2) -- (10,2);
      \draw[dotted,red][->] (10,2) -- (10,-4);
      \draw[dotted,red][] (5,0) -- (10,0);
      \draw[dotted,red][->] (10,0) -- (14,4);
      \draw[dotted,red][] (10,2) -- (10,4);
      \draw[dotted,red][->] (10,4) -- (12,6);
    \end{tikzpicture}
    \caption{Tropical projectivity between two reduced tropical pencil of points}
    \label{fig:perspectivity}
  \end{figure}
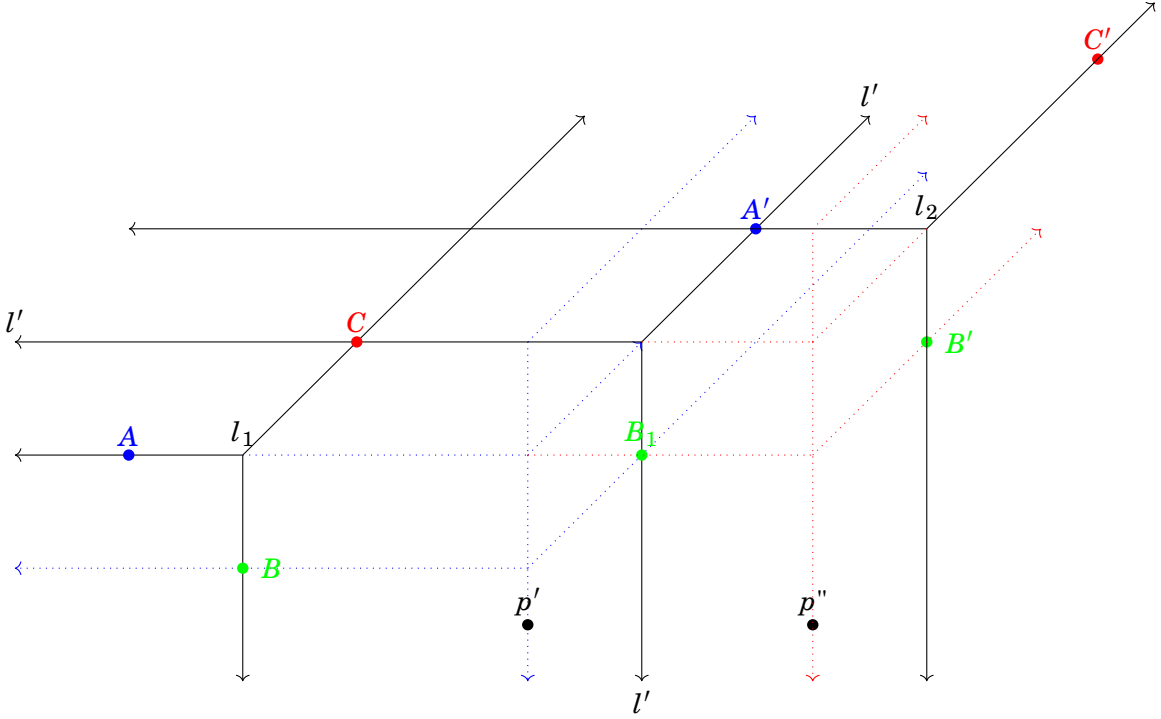
  
\end{proof}

\section{Conclusions}\label{sec:section5}

We would like to elaborate on the various connections and possible extensions of our results presented here. As alluded to earlier, restricted to the case of the tropical semiring, study regarding affine buildings has been carried out in \cite{joswig2007affine}, \cite{werner2011tropical}. Since, the fundamental theorem of projective geometry is a structure theorem for classical groups, hence it plays an important role in the study of classical groups and associated buildings. Therefore, a natural successor of our results could be a possible definition of classical groups over semimodules and associated buildings. 

We recall some basic definitions from \cite{joswig2014essentials} and \cite{joswig2007affine}. In this case, the study focuses on the case when the underlying field is $\mathbb{K} = \mathbb{C}((t)) $, field of Laurent series with complex coefficients. It has a valuation ring which is local, with the residue field isomorphic to $\mathbb{C}$. The vector space $\mathbb{K}^{d}$ is a module over the valuation ring and lattices over this ring are considered as submodules generated by $d$ linearly independent vectors in $\mathbb{K}^{d}$. Two lattices $\lambda_{1}, \lambda_{2} \subset \mathbb{K}^{d}$ are called \emph{equivalent} if $c \cdot \lambda_{1} = \lambda_{2}$, for some $c \in K^{x}$. Two equivalence class of lattices $\langle \lambda_{1} \rangle$ and $\langle \lambda_{2} \rangle$ are called \emph{adjacent} if there are representatives $\lambda_{1}$ and $\lambda_{2}$ such that $t \cdot \lambda_{2} \subset \lambda_{1} \subset \lambda_{2}$.

The adjacency relation on equivalence classes of lattices in $\mathbb{K}_{d}$ defines a graph,
and its \emph{flag simplicial complex} is the \emph{Bruhat–Tits building} $\mathbb{B}_{d}(\mathbb{K})$. A point in the tropical projective space $\mathbb{TP}^{d-1}$ is a \emph{lattice point} if it is represented by a vector $x$ in $\mathbb{Z}^{d}$. A \emph{tropical lattice polytope} is the tropical convex hull of finitely many lattice points in $\mathbb{TP}^{d-1}$. Similar to the classical case, we can define adjacency between tropical lattice points with respect to the \emph{natural metric} defined in \cite{joswig2007affine}. This defines a graph whose flag simplicial complex provides a \emph{triangulation} of the affine space $\mathbb{TP}^{d-1}$.

We believe that similar to the case of modules over valuation rings, one can study lattices over semimodules over an appropriately chosen semiring, much in essence to the study which we carried out in this article. This would also help in defining buildings over semimodules, and study of matrix groups over semirings. Further details will be explored elsewhere.

We also would like to point the reader to the recent developments in the theory of \emph{ordered blueprints} \cite{baker2018moduli} and possible connections to our work. We recall some basic definitions from \cite{baker2018moduli}. An \emph{ordered semiring} is a semiring $R$ with a partial order $\leq$ such that $x \leq y$ and $z \leq t$ implies $x + z \leq y + t$ and $xz \leq yt$. An \emph{ordered blueprint} is a triple $B = (B^{.}, B^{+}, \leq)$ where $(B^{+}, \leq)$ is an ordered semiring and $B^{.}$ is a multiplicative subset of $B^{+}$ that generates $B^{+}$ as a semiring and contains $0$ and $1$. An \emph{ordered bluefield} is an ordered blueprint $B$ with $B^{.} = B^{\times} \cup \{ 0 \}$. We suggest the reader to refer \cite{baker2018moduli} for further elaborate descriptions of \emph{ordered blue schemes} and the construction of the functor \emph{Proj}, but we emphasize the main takeaway; the notion of a projective space over ordered blueprints, and examples of projective line and projective planes \cite[Example 4.13]{baker2018moduli}. This leads us to the following question, 

\begin{question}
Can a version of fundamental theorem of projective geometry be established over matroids in the background of ordered blueprints and the associated projective space ?
\end{question}

We wish to explore this question further and connect with the existing notion of projective geometry over ordered blueprints.



We now discuss one of the most prominent aspects of classical projective geometry related to our results - \emph{cross ratios}. Most of what we recall here is referenced from the book by Jürgen Richter-Gebert \cite{richter2011}. We recall the notation that a point $a \in \mathbb{R}^{2}$ is denoted as $ a =   \begin{pmatrix}
  a_{1} \\
  a_{2} \\
\end{pmatrix} $  
and
\[ [a,b] = \text{det}\begin{pmatrix}
a_{1} & b_{1} \\
a_{2} & b_{2} 
\end{pmatrix} \]

\begin{def0}
Let $a,b,c,d$ be four nonzero vectors in $\mathbb{R}^{2}$. The \emph{cross-ratio} $(a,b;c,d)$ is the following quantity,

\[ (a,b;c,d) := \frac{[a,c][b,d]}{[a,d][b,c]}  \]
\end{def0}

This definition extends naturally to complex numbers and also to arbitrary fields. An important feature of cross ratios is that it is invariant under projective transformations.

\begin{lemma}
For any real nonzero parameters $\lambda_{a},\lambda_{b},\lambda_{c},\lambda_{d} \in \mathbb{R}$ we have 

\[(a,b;c,d) = (\lambda_{a}a,\lambda_{b}b;\lambda_{c}c,\lambda_{d}d)\]
\end{lemma}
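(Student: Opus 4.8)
The plan is to reduce the claim to the defining formula for the cross-ratio and then exploit the multilinearity (in fact bilinearity) of the bracket $[\cdot,\cdot]$. First I would observe that for any scalars $\lambda, \mu \in \R$ and vectors $a, b \in \R^2$ one has $[\lambda a, \mu b] = \lambda \mu [a, b]$; this is immediate from the explicit $2 \times 2$ determinant expression, since scaling a column of a matrix scales its determinant. This single observation is the only real content; everything else is bookkeeping.

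Next I would substitute $\lambda_a a, \lambda_b b, \lambda_c c, \lambda_d d$ into the definition
\[
(\lambda_a a, \lambda_b b; \lambda_c c, \lambda_d d) = \frac{[\lambda_a a, \lambda_c c]\,[\lambda_b b, \lambda_d d]}{[\lambda_a a, \lambda_d d]\,[\lambda_b b, \lambda_c c]}
\]
and apply the scaling identity to each of the four brackets. The numerator picks up a factor $\lambda_a \lambda_c \lambda_b \lambda_d$ and the denominator picks up a factor $\lambda_a \lambda_d \lambda_b \lambda_c$; these are the same product $\lambda_a \lambda_b \lambda_c \lambda_d$, so they cancel (here we use that all four parameters are nonzero, so the cancellation is legitimate and no bracket in the denominator is forced to vanish by the scaling). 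What remains is exactly $\frac{[a,c][b,d]}{[a,d][b,c]} = (a,b;c,d)$, which is the assertion.

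There is essentially no obstacle here: the statement is a routine consequence of the homogeneity of the determinant in each column, and the argument is a two-line computation. The only point requiring a word of care is that the cross-ratio is a ratio, so one should note that the hypotheses (all vectors nonzero, all scalars nonzero) are what guarantee the expression is well defined both before and after scaling; once that is noted, the cancellation of the common factor $\lambda_a\lambda_b\lambda_c\lambda_d$ completes the proof.
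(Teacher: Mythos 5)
Your proof is correct: the homogeneity of the $2\times 2$ determinant in each column gives $[\lambda a,\mu b]=\lambda\mu[a,b]$, and the common factor $\lambda_a\lambda_b\lambda_c\lambda_d$ cancels between numerator and denominator exactly as you say. The paper itself recalls this lemma from Richter-Gebert without proof, but your argument is the standard one and is the same computation (scalars factoring out and cancelling) that the paper later carries out explicitly for its tropical analogue.
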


\begin{lemma}\label{lemma:matrix}
Let $M$ be a $2 \times 2$ matrix with nonvanishing determinant and let $a,b,c,d$ be four vectors in $\mathbb{R}^{2}$. Then we have 

\[(a,b;c,d) = (M \cdot a, M\cdot b; M \cdot c, M \cdot d)\]
\end{lemma}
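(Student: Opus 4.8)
The plan is to prove that cross-ratio is invariant under the action of an invertible $2\times 2$ matrix by reducing it to the multilinearity of the determinant bracket $[\cdot,\cdot]$. First I would recall the elementary fact that for any $2\times 2$ matrix $M$ and any vectors $u,v \in \mathbb{R}^2$, one has $[Mu, Mv] = \det(M)\,[u,v]$; this is just the multiplicativity of the determinant applied to the product of $M$ with the matrix whose columns are $u$ and $v$, together with the fact that $[u,v]$ is by definition the determinant of that column matrix.

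With this identity in hand, the computation is immediate. Writing out the definition,
\[
(M\cdot a, M\cdot b; M\cdot c, M\cdot d) = \frac{[Ma, Mc]\,[Mb, Md]}{[Ma, Md]\,[Mb, Mc]},
\]
I would substitute $[Mx, My] = \det(M)\,[x,y]$ into each of the four brackets. The factor $\det(M)$ appears once in each of the two numerator brackets and once in each of the two denominator brackets, so it contributes $\det(M)^2$ to both numerator and denominator. Since $\det(M) \neq 0$ by hypothesis, these factors cancel, leaving exactly $\frac{[a,c][b,d]}{[a,d][b,c]} = (a,b;c,d)$.

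The only technical hypothesis that matters is nonvanishing of $\det M$: this is needed both so that the cancellation $\det(M)^2/\det(M)^2 = 1$ is legitimate and so that all denominators in sight remain nonzero whenever the original cross-ratio is defined. I expect no real obstacle here — the ``hard part'' is merely making sure the bracket identity $[Mu,Mv]=\det(M)[u,v]$ is stated cleanly, since everything else is a one-line substitution. One could also note that the previous lemma (scaling invariance in each slot) is subsumed by this one on taking $M$ diagonal, though that is not needed for the proof.
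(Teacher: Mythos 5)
Your proof is correct and is essentially the argument the paper relies on (the lemma is recalled from Richter-Gebert without proof, and the paper's later tropical computation implicitly uses exactly the bracket identity $[Mu,Mv]=\det(M)\,[u,v]$ whose failure in the tropical setting is the point of the subsequent discussion). The substitution and cancellation of $\det(M)^{2}$ is the standard proof, so there is nothing to add.
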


This invariance can be seen in the following result regarding classical perspectivities in the plane,

\begin{cor0}[Corollary 4.1 \cite{richter2011}]
Let $o$ be a point and let $l$ and $l'$ be two lines not passing through $o$. If four points $a,b,c,d$ are on $l$ and are projected by the viewpoint $o$ to four points $a',b',c',d'$ on $l'$, then the cross ratios satisfy $(a,b;c,d) = (a',b';c',d')$.
\end{cor0}

We now explore the possibility of a tropical counterpart to the notion of a cross ratio. Firstly, we recall the definition of the \emph{tropical determinant} \cite{maclagan2009introduction}. For a $n \times n$ matrix $X = x_{ij}$, with entries from the tropical semiring, the tropical determinant is defined as follows

\begin{equation*}
\begin{split}
 \text{tdet}(X) & = \bigoplus_{\sigma \in S_{n}}  x_{1\sigma(1)} \odot x_{2\sigma(2)} \odot \hdots \odot x_{n\sigma(n)} \\
& = \max_{\sigma \in S_{n}}\ x_{1\sigma(1)} \odot x_{2\sigma(2)} \odot \hdots \odot x_{n\sigma(n)}
\end{split}    
\end{equation*}

where $\sigma$ represents a cycle in $S_{n}$, the symmetric group on $n$ elements. A square matrix $X$ is said to be \emph{tropically singular} if $\text{tdet(X)} = \infty$ or the tropical polynomial tdet vanishes on $X$ \cite{joswig2014essentials}.

We fix the following notation, 

\[ [a,b]^{tr} = \text{tdet}\begin{pmatrix}
a_{1} & b_{1} \\
a_{2} & b_{2} 
\end{pmatrix} \]

Let $a,b,c,d$ be four nonzero vectors in $\mathbb{R}^{2}$. We consider the  
the following quantity,

\[ (a,b;c,d)^{tr} := ([a,c]^{tr} \odot [b,d]^{tr}) -  ([a,d]^{tr} \odot [b,c]^{tr})  \]

We now try to see if $(a,b;c,d)^{tr}$ can be a candidate for a \emph{tropical cross-ratio}.
We show invariance of $(a,b;c,d)^{tr}$ with respect to tropical scalar multiplication,

\begin{lemma}
For any real nonzero parameters $\lambda_{a},\lambda_{b},\lambda_{c},\lambda_{d} \in \mathbb{R}$ we have 

\[(a,b;c,d)^{tr} = (\lambda_{a} \odot a,\lambda_{b} \odot b;\lambda_{c} \odot c,\lambda_{d} \odot d)^{tr}\]
\end{lemma}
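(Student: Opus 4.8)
The plan is to isolate a single structural fact and let a cancellation finish the argument. The fact is the \emph{homogeneity} of the tropical $2\times 2$ determinant: for scalars $\lambda,\mu\in\mathbb{R}$ and vectors $x,y\in\mathbb{R}^{2}$ one has
\[ [\lambda\odot x,\ \mu\odot y]^{tr} \;=\; \lambda\odot\mu\odot [x,y]^{tr}. \]
This is immediate from the definition, since $\lambda\odot x=(\lambda+x_{1},\lambda+x_{2})$ and hence $[\lambda\odot x,\mu\odot y]^{tr}=\max\big((\lambda+x_{1})+(\mu+y_{2}),\,(\lambda+x_{2})+(\mu+y_{1})\big)=\lambda+\mu+\max(x_{1}+y_{2},x_{2}+y_{1})$: scaling shifts \emph{both} monomials inside the $\max$ by the same constant, so it factors out of the tropical determinant.

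First I would apply this identity to each of the four brackets occurring in $(\lambda_{a}\odot a,\lambda_{b}\odot b;\lambda_{c}\odot c,\lambda_{d}\odot d)^{tr}$. The ``numerator-type'' term $[\lambda_{a}\odot a,\lambda_{c}\odot c]^{tr}\odot[\lambda_{b}\odot b,\lambda_{d}\odot d]^{tr}$ becomes $\lambda_{a}\odot\lambda_{b}\odot\lambda_{c}\odot\lambda_{d}\odot[a,c]^{tr}\odot[b,d]^{tr}$, and the ``denominator-type'' term $[\lambda_{a}\odot a,\lambda_{d}\odot d]^{tr}\odot[\lambda_{b}\odot b,\lambda_{c}\odot c]^{tr}$ becomes $\lambda_{a}\odot\lambda_{b}\odot\lambda_{c}\odot\lambda_{d}\odot[a,d]^{tr}\odot[b,c]^{tr}$, with exactly the same tropical prefactor $\lambda_{a}\odot\lambda_{b}\odot\lambda_{c}\odot\lambda_{d}$ in both.

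Then, because the outermost operation in the definition of $(\cdot,\cdot;\cdot,\cdot)^{tr}$ is ordinary subtraction while $\odot$ is ordinary addition, the common summand $\lambda_{a}+\lambda_{b}+\lambda_{c}+\lambda_{d}$ sits in both the minuend and the subtrahend and cancels, leaving precisely $([a,c]^{tr}\odot[b,d]^{tr})-([a,d]^{tr}\odot[b,c]^{tr})=(a,b;c,d)^{tr}$. I expect no genuine obstacle here: the computation is routine once the homogeneity lemma is stated. The one point worth flagging is that the cancellation hinges on the outer operation being \emph{classical} subtraction (no well-defined tropical subtraction exists), so scalar invariance is, in a precise sense, exactly the feature that motivates writing the tropical cross-ratio with an ordinary difference of two tropical products.
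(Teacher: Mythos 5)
Your proposal is correct and follows essentially the same route as the paper: the paper's proof expands all four tropical determinants, pulls the scalars $\lambda_{a}+\lambda_{b}+\lambda_{c}+\lambda_{d}$ out of the maxima, and cancels them through the classical subtraction, which is exactly your homogeneity-plus-cancellation argument written out inline rather than isolated as a preliminary identity. Stating the homogeneity of the tropical $2\times 2$ determinant as a separate fact is a tidy packaging, but the underlying computation is identical.
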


\begin{proof}

\begin{equation*}
\begin{split}
(\lambda_{a} \odot a,\lambda_{b} \odot b;\lambda_{c} \odot c,\lambda_{d} \odot d)^{tr} & = \left( \text{tdet}\begin{pmatrix}
\lambda_{a} \odot a_{1} & \lambda_{c} \odot c_{1} \\
\lambda_{a} \odot a_{2} & \lambda_{c} \odot c_{2} 
\end{pmatrix} \odot \text{tdet}\begin{pmatrix}
\lambda_{b} \odot b_{1} & \lambda_{d} \odot d_{1} \\
\lambda_{b} \odot b_{2} & \lambda_{d} \odot d_{2} 
\end{pmatrix} \right) \\ 
& - \left( \text{tdet}\begin{pmatrix}
\lambda_{a} \odot a_{1} & \lambda_{d} \odot d_{1} \\
\lambda_{a} \odot a_{2} & \lambda_{d} \odot d_{2} 
\end{pmatrix} \odot \text{tdet}\begin{pmatrix}
\lambda_{b} \odot b_{1} & \lambda_{c} \odot c_{1} \\
\lambda_{b} \odot b_{2} & \lambda_{c} \odot c_{2} 
\end{pmatrix} \right) \\
& = \text{max}(\lambda_{a} \odot a_{1} + \lambda_{c} \odot c_{2} , \lambda_{a} \odot a_{2} + \lambda_{c} \odot c_{1}) + \text{max}(\lambda_{b} \odot b_{1} + \lambda_{d} \odot d_{2}, \\
& \lambda_{b} \odot b_{2} + \lambda_{d} \odot d_{1}) - (\text{max}(\lambda_{a} \odot a_{1} + \lambda_{d} \odot d_{2}, \\
& \lambda_{a} \odot a_{2} + \lambda_{d} \odot d_{1}) + \text{max}(\lambda_{b} \odot b_{1} + \lambda_{c} \odot c_{2}, \lambda_{b} \odot b_{2} + \lambda_{c} \odot c_{1}) ) \\
& = \lambda_{a} + \lambda_{c} + \text{max}(a_{1} + c_{2}, a_{2} + c_{1}) + \lambda_{b} + \lambda_{d} + \text{max}(b_{1} + d_{2}, b_{2} + d_{1}) - \\
& (\lambda_{a} + \lambda_{d} + \text{max}(a_{1} + d_{2}, a_{2} + d_{1}) + \lambda_{b} + \lambda_{c} + \text{max}(b_{1} + c_{2}, b_{2} + c_{1})) \\
& = (\lambda_{a} + \lambda_{b} + \lambda_{c} + \lambda_{d}) + \text{max}(a_{1} + c_{2}, a_{2} + c_{1}) + \text{max}(b_{1} + d_{2}, b_{2} + d_{1}) \\
& -(\lambda_{a} + \lambda_{b} + \lambda_{c} + \lambda_{d}) - \text{max}(a_{1} + d_{2}, a_{2} + d_{1}) - \text{max}(b_{1} + c_{2}, b_{2} + c_{1})) \\
& = (a,b;c,d)^{tr}
\end{split}
\end{equation*}
\end{proof}

However, we are not able to recover all classical results. For example, let us consider the case of Lemma \ref{lemma:matrix}. We consider a tropically non-singular $2 \times 2$ matrix $M = \begin{pmatrix}
m_{1} & m_{2} \\
m_{3} & m_{4} 
\end{pmatrix}$. Then for $ a =   \begin{pmatrix}
  a_{1} \\
  a_{2} \\
\end{pmatrix} $ and $ b =   \begin{pmatrix}
  b_{1} \\
  b_{2} \\
\end{pmatrix} $ we get, 

\[ M \odot a =  \begin{pmatrix}
m_{1} \odot a_{1} \oplus  m_{2} \odot a_{2} \\
m_{3} \odot a_{1} \oplus  m_{4} \odot a_{2} 
\end{pmatrix} \quad \text{and} \quad  M \odot b =  \begin{pmatrix}
m_{1} \odot b_{1} \oplus  m_{2} \odot b_{2} \\
m_{3} \odot b_{1} \oplus  m_{4} \odot b_{2} 
\end{pmatrix} \]

\begin{equation*}
\begin{split}
[M \odot a, M \odot b]^{tr} & = \text{tdet}\begin{pmatrix}
m_{1} \odot a_{1} \oplus  m_{2} \odot a_{2} & m_{1} \odot b_{1} \oplus  m_{2} \odot b_{2} \\
m_{3} \odot a_{1} \oplus  m_{4} \odot a_{2}  & m_{3} \odot b_{1} \oplus  m_{4} \odot b_{2} 
\end{pmatrix} \\
& = \underline{m_{1} \odot m_{3} \odot a_{1} \odot b_{1}} \oplus m_{1} \odot m_{4} \odot a_{1} \odot b_{2} m_{2} \odot m_{3} a_{2} \odot b_{1} \oplus \underline{m_{2} \odot m_{4} \odot a_{2} \odot b_{2}} \\
& \oplus \underline{m_{1} \odot m_{3} \odot a_{1} \odot b_{1}} \oplus m_{3} \odot m_{2} \odot a_{1} \odot b_{2} \oplus m_{1} \odot m_{4} \odot a_{2} \odot b_{1} \oplus \underline{m_{2} \odot m_{4} \odot a_{2} \odot b_{2}} \\
&  = a_{1} \odot b_{2} \odot ( m_{1} \odot m_{4} \oplus m_{2} \odot m_{3} ) \oplus a_{2} \odot b_{1} \odot ( m_{1} \odot m_{4} \oplus m_{2} \odot m_{3} ) \\
& \oplus m_{1} \odot m_{3} \odot a_{1} \odot b_{1} \oplus m_{2} \odot m_{4} \odot a_{2} \odot b_{2} \\
& = ( m_{1} \odot m_{4} \oplus m_{2} \odot m_{3} ) \odot (a_{2} \odot b_{1} \oplus a_{1} \odot b_{2}) \oplus m_{1} \odot m_{3} \odot a_{1} \odot b_{1} \oplus m_{2} \odot m_{4} \odot a_{2} \odot b_{2} \\
& = \text{tdet}(M) \cdot [a,b]^{tr} \oplus m_{1} \odot m_{3} \odot a_{1} \odot b_{1} \oplus m_{2} \odot m_{4} \odot a_{2} \odot b_{2}
\end{split}    
\end{equation*}
We see that in the tropical case, we have an additional term along with $\text{tdet}(M) \cdot [a,b]^{tr}$, which is due to the fact that there is no additive inverse tropically and the tropical determinant is a tropical sum whereas the classical determinant has a change of sign for terms which results in cancellations. Therefore, for a tropically non-singular $2 \times 2$ matrix $M$ we cannot conclude that 

\[(a,b;c,d)^{tr} = (M \cdot a, M\cdot b; M \cdot c, M \cdot d)^{tr} \]

We would like to explore other suitable variants of the definition of a tropical cross ratio, in the context of tropical projective geometry, which complement the definition of tropical perpsectivity and remain invariant under projective transformations, which is true in the classical case.

\begin{remark}
We do acknowledge that there is a related description of  tropical double ratios and tropical cross ratio in the context of curve counting discussed in \cite{MR2404949} and \cite{goldner2021counting} respectively. Also, in \cite{baker2018moduli} definition of a cross ratio is defined over the basis-exchange graph of a matroid over a pasture. However, for our purposes of looking at tropical projective geometry, we consider other variants which are more suited for our discussion.  
\end{remark}






\bibliographystyle{alpha}
\bibliography{biblio.bib}
\end{document}